\documentclass{article}

\usepackage{graphicx}
\usepackage{amsmath, amstext,mathbbol, amsthm, multirow, hyperref}
\usepackage{cite}


\newtheorem{thm}{Theorem}[section]

\newtheorem{lemma}[thm]{Lemma}
\newtheorem{cond}{Condition}[section]

\newtheorem{remark}[thm]{Remark}
\newtheorem{example}{Example}[section]

\def\<<{``}
\def\R{\mathbb{R}}

\def\bE{\mathbf{E}}

\def\E{\mathbb{E}}

\def\cov{\mathrm{Cov}}

\usepackage{color}

\title{Higher Moments and Prediction Based Estimation for the COGARCH(1,1) model}
\author{Enrico Bibbona${}^{a}$, Ilia Negri${}^{b}$\\
${}^{a}$\small{University of Torino, Torino, Italy}\\
${}^{b}$\small{University of Bergamo, Bergamo, Italy}\\
\small{enrico.bibbona@unito.it}, \small{ilia.negri@unibg.it}\\
}

\begin{document}
\maketitle

\begin{abstract}
COGARCH models are continuous time version of the well known GARCH models of financial returns. The first aim of this paper is to show how the method of Prediction-Based Estimating Functions  (PBEFs) can be applied to draw statistical inference from observations of a COGARCH(1,1) model if the higher order structure of the process is clarified.
A second aim of the paper is to provide recursive expressions for the joint moments of any fixed order of the process. Asymptotic results are given and a simulation study shows that the method of PBEF outperforms the other available estimation methods.

\medskip 
Keywords: cogarch model, stochastic volatility models, prediction based estimating functions, parameter estimation, higher moments\end{abstract}

\section{Introduction}

The COGARCH model with order (1,1) has been introduced as a continuous version of the GARCH(1,1) model in \cite{kluppelbergJAP2004}. It is driven by a L\'evy  process $L=(L_{t})_{t\geq 0}$ through the equation $dG_{t}=\sigma_{t-} dL_{t}$ and the resulting volatility process $\sigma_{t}$ satisfies the stochastic differential equation $d \sigma^{2}_{t}=(\beta-\eta\sigma^{2}_{t-})dt + \phi\sigma^{2}_{t-} d[L]^{d}_{t}$
where $[L]^{d}_{t}$ is the discrete part of the quadratic variation of $L$.
Financial log-returns are modeled by the increments of the process $G_{t,h}=G_{t+h}-G_{t}$. 
The L\'evy process is the sole source of randomness and when it jumps both the price and the volatility jump at the same time.

For a more thorough presentation of such model, for the relation between GARCH sequences and the COGARCH process, for a comparison with other continuos time models with the same aim and for how this model is able to capture the stylized facts about financial data we refer the reader to the following papers \cite{kluppelbergJAP2004, kluppelbergReview, kluppelbergEJ2007, limitSPA, mleMaller, BuchmannLimit}. In the last few years many generalizations of the COGARCH model have been proposed. Among them, COGARCH processes of order (p,q) \cite{brockwellPQ} and multivariate COGARCH(1,1) \cite{stelzerMultivariate}.

A few methods for the estimation of the model parameters from a sample of equally spaced returns $G_{ir, r}=G_{(i+1)r}-G_{ir}$ are currently available.

In \cite{kluppelbergEJ2007} explicit estimators have been derived from a Method of Moments (MM). In \cite{mleMaller} a Pseudo Maximum Likelihood (PML) method has been  proposed that allows also for non equally spaced observations (see also \cite{mleSangyeol}), and in \cite{muller2010mcmc} an MCMC-based estimation method has been presented for the model driven by a compound Poisson process.

The first aim of the present paper is to demonstrate that the method of Prediction Based Estimating Functions (PBEFs) introduced in \cite{PredictionBased}  is applicable to the COGARCH(1,1) model and that its performance is better then the other available procedures. The general theory of PBEFs allows to find an optimal PBEF if the joint moments of the observations are explicitly known up to a certain order. 

Motivated by the search for an optimal PBEF, a second aim of the paper is to provide explicit expressions for the higher moments of the process. In particular a recursive formula for $\E\Big(G_{t}^{2i}\sigma^{2(k-i)}_{t}\Big)$ and $\bE_{v} \Big(G_{s,h}^{2i}\sigma^{2(k-i)}_{s+h}\Big)$ is found whenever they exist, for any total order $2k$ and any integer $i\leq k$ and for any $t,h>0$ and $s>v>0$. $\bE_{v}$ denotes conditional expectation with respect to the natural filtration ${\cal F}_{v}$.

Explicit expression for the joint moments $\E(G_{t_{h},r}^{2i_{h}}\,G_{t_{h-1},r}^{2i_{h-1}}\cdots\,G_{t_{2},r}^{2i_{2}}\,G_{t_{1},r}^{2i_{1}})$ are also provided  for any integers $i_{1}\cdots i_{h}$ and hence any total order $k=i_{1}+\cdots+ i_{h}$ and for  any times $t_{h}\cdots t_{1}$ such that $t_{i}-t_{i-1}\geq r$.

Up to the order four ($k=2$) our formulae coincide with those of \cite{kluppelbergEJ2007}, but explicit expressions for the higher orders are provided as a new result whose interest might go beyond the statistical methodology here proposed.

To validate the method, both asymptotic properties and finite-sample performances on a simulated dataset are investigated. The code for the numerical example and for the computation of the moments has been collected into an R \cite{R} package called COGARCH, which is briefly illustrated in the Supporting Information.

The paper is organized as follows. In Section \ref{model}  the definition and the properties of COGARCH(1,1) model are presented. In Section \ref{PBEF}, a suitable form of the prediction based estimation function method tailored for the COGARCH model is given, together with asymptotic results and the derivation of an optimal PBEF. In section \ref{moment}  the higher moments are derived and explicit formulae are given. In section \ref{explicitAssumptions} the necessary assumptions on COGARCH(1,1) needed to apply the method of PBEFs are stated and some example where these assumptions are satisfied are presented. Finally in Section \ref{simulation_study} finite-sample performances on a simulated dataset of the proposed method are investigated and compared with those of the other available methods. Supporting Information is available to describe the R package COGARCH and to provide a Mathematica \cite{Mathematica} notebook for the symbolic computation of the moments.

\section{The COGARCH(1,1) model} \label{model}
Let us introduce on a filtered probability space $(\Omega, {\cal F}, \{{\cal F}_t \}_{t\geq 0}, \mathbb P) $ with the usual properties,  a  L\'evy
process $L=\left( L_t\right)_{t\geq 0}$  with triplet $(\gamma,\tau^{2},\nu)$ and Poisson random measure $N$ (see \cite{applebaum, kyprianou ,protter}).
The COGARCH(1,1) model is defined as the solution $(G,\sigma^2)=\left(G_{t},\sigma^{2}_{t}\right)_{t\geq 0}$ of the following system of 
stochastic differential equations (SDE) driven by the L\'evy process $L$
\begin{equation}\left\{ \begin{aligned} & dG_{t}=\sigma_{t-} dL_{t}\\
&d \sigma^{2}_{t}=(\beta-\eta\sigma^{2}_{t-})dt + \phi\sigma^{2}_{t-} d[L]^{d}_{t},
\end{aligned}\right.\label{sde}\end{equation}
with initial value $G_0=0$ and $\sigma_0$ a random variable independent of the L\'evy process $\left( L_t\right)_{t\geq 0}$. 
The parameter space $\Theta\subset \R^3$ is defined as the set of those  $\theta=(\beta,\eta,\phi)$ such that $\beta>0$, $\eta> 0$ and $\phi> 0$.
By $[L]^{d}_{t}$,
 for every $t\geq 0$, we denote the discrete part of the quadratic variation
$[L]_{t}=\tau^{2}t +  [L]_{t}^{d}$ 
 of the driving L\'evy process $L_{t}$ defined as
\[
[L]^{d}_{t} =\int_{\mathbb{R}}x^{2}N(t,dx)=\sum_{0<s\leq t} (\Delta L_{s})^{2}
\]

with $\Delta L_{s}=L_{s}-L_{s-}$. 

The following is assumed throughout the paper.
\begin{cond}
\label{ass1}
$\E (L_{1})=0$ and $\E (L_{1}^{2})=1$. 
\end{cond}

If Conditions \ref{ass1} holds, then $L_{t}$ is a martingale and the volatility of the component $G_{t}$ is given solely by $\sigma_{t}$.

\begin{remark}
Under Condition \ref{ass1}, $\gamma$ and $\tau^2$ are not parameters of the model. Indeed since $\E (L_1)=0$,
$\gamma= \int_{|x|\geq1} x d\nu$. Moreover, since by the product formula $L_t^2 -[L]_{t}= 2\int_0^t L_s d L_s$, we have
\[  \E[L]_{1}=\tau^{2} +  \int_{\mathbb{R}}x^{2}\nu(dx)= \E\left(L_1^2 \right)=1,\]
hence $\tau^{2}= 1- \int_{\mathbb{R}}x^{2}\nu(dx)$.
Let  us however remark that the L\'evy measure $\nu$ may contain further parameters, that are supposed to be known.

\end{remark}

%
We list here without proof some properties of the COGARCH(1,1) that we will use later on.

The explicit solution of the second of equations \eqref{sde} with initial condition $\sigma^{2}_{u}$ at time $u$ is 
\begin{equation}\label{conditional}
\sigma^{2}_{t}=\beta\text{e}^{-(X_{t}-X_{u})}\int_{u}^{t}\text{e}^{-(X_{u}-X_{s})} \, ds + \text{e}^{-(X_{t}-X_{u})} \sigma^{2}_{u}.
\end{equation}
that is written in terms of the auxiliary process
\[X_{t}=\eta t- \sum_{0<s\leq t}\log\big(1+\phi (\Delta L_{s})^{2}\big)\]
whose Laplace transform can be written as
\[\E \text{e}^{-cX_{t}}=\text{e}^{t\Psi(c)}\]
for a function $\Psi$ defined as
\begin{equation}
\label{Psi}
\Psi(c)=-\eta c +\int_{R}\left[(1+\phi\, x^{2})^{c}-1\right] \nu(dx)=-\eta c+ \sum_{i=1}^{c} \binom{c}{i} \phi^{i} \int_{R}x^{2i}\nu(dx) .
\end{equation}
The Laplace transform is finite at $c$ if and only if $L_{1}$ has finite moments of order $2c$ and,  together with $\Psi(c)<0$, this is a sufficient condition for the process $\sigma^{2}_{t}$ to admit a stationary distribution (cf. \cite{kluppelbergJAP2004}) with finite moments of any order $k\leq c$  given by the following formula

\begin{equation}\E\sigma^{2k}_{\infty}=k!\beta^{k}\prod_{l=1}^{k}\frac{-1}{\Psi(l)}.\label{sigma_infinito}\end{equation}

In the COGARCH(1,1) model log-returns are represented as increments $G_{t,h}=G_{t+h}-G_t$ of the $G$ process.
The couple $\left(G_{t}, \sigma^2_{t}\right)_{t\geq 0}$ is a Markov process, but the single component $\left(G_{t}\right)_{t\geq 0}$  is not.
It can be proved (see \cite{kluppelbergJAP2004}) that if $\E (L^{4}_{1})<\infty$ and if the parameters are such that $\Psi(2)<0$, both the volatility process $(\sigma_t^2)_{t\geq 0}$ and the log-returns process $\left( G_{t,h}\right)_{t\geq 0}$ are stationary (allows for a stationary density) and strongly mixing with an exponentially decreasing rate.
We assume that $\sigma^{2}_0$ follows the stationary distribution.

\section{Prediction Based Estimating Functions}

\label{PBEF}
The statistical problem we address is the estimation of the parameter $\theta\in \Theta$ of a COGARCH(1,1) model whose driving L\'evy process is known a priori, from a sample of equally spaced log-returns $G_{jr, r}=G_{(j+1)r}-G_{jr}$, $j=1,\ldots, n$.
Three methods are currently available to this aim. Estimators based on the MM was introduced in \cite{kluppelbergEJ2007}. It is a very flexible tool that provides explicit estimators without the need for any assumption on the underlying L\'evy process. The estimators are consistent but not very efficient. A weak point of the method is that it is very sensitive to the tuning of a parameter. In \cite{mleMaller} a PML method is proposed. It is based on the approximation of the COGARCH model by a sequence of discrete-time GARCH series and on the application of Gaussian maximum likelihood to the GARCH approximation. A simulation study demonstrates that if the model is driven by a compound Poisson with normal jumps, using PML the mean squared errors is reduced with respect to the MM, but relevant biases can be found. Asymptotic results for this method are available in the high frequency asymptotic scheme in \cite{mleSangyeol}. MM and PML are the benchmarks against which we are going to test in Section \ref{simulation_study} the methodology introduced below.
Furthermore \cite{muller2010mcmc} introduced an MCMC-based estimation method in case the model is driven by a compound Poisson process whose applicability is limited also by its computational intensity.

The aim of this Section is to introduce the method of Prediction Based Estimating Functions (PBEFs) and to show that it can be applied to the COGARCH(1,1) model provided that we investigate further the structure of its moments. Asymptotic variances are also assessed in terms of such moments and an Optimal Prediction Based Estimating Function is found that minimizes such asymptotic variance.


Estimating functions are functions of the parameters and of the data whose zeros are used as estimators of the parameters.  The most prominent example is the Score function which is known to be a martingale.
PBEFs were introduced by Sorensen  in \cite{PredictionBased} (see also \cite{ditlevsenpbe,PredictionBasedNew} for some recent developments) as a generalization of martingale estimation functions which are particularly suitable for non Markovian processes. 

The basic idea underlying PBEFs is that if you are able to predict the square $G^{2}_{ir, r}$ of the $i$-th observation of the returns on the basis of the previous $q$ squared observations and of the value of the parameters by means of a random variable $\pi^{i-1}(\theta, G_{(i-1)r, r},\cdots, G_{(i-q)r, r})$, denoted also by $\pi^{i-1}(\theta)$, then the value of $\theta$ that annihilates a weighted sum of the prediction errors $\sum_{i>q} w_{i}(G^{2}_{ir, r}-\pi^{i-1}(\theta))$ might be a good estimate of the unknown parameters.
To make such statement more formal let us introduce some notation and definitions.

Let ${\cal H}_i^\theta$ be the Hilbert space of all square integrable real functions of the observations $\{G_{jr, r}\}_{j=0}^{i}$ endowed with the usual inner product 
$$
\langle h , g \rangle= \E_\theta\left(h(G_{0,r},\ldots, G_{ir, r}) g(G_{0,r},\ldots, G_{ir, r})\right),
$$where $\E_\theta$ denotes the expectation under the model with parameter $\theta$. Let us fix an integer $q$. For any $i=q+1,\ldots,n$ we introduce the closed subspaces ${\cal P}^\theta_{i}$ of ${\cal H}_i^\theta$ spanned by 1 and the $q$ squared observations that come before the $i$-th, i.e. ${\cal P}^\theta_{i}=\text{span}(1,G_{(i-q) r, r}^2,\ldots,G_{(i-1) r, r}^2)$, where $1$ denotes the constant function with unit value. Provided  that 
 $\E_\theta(G_{ir, r}^{2})<\infty $ for every $\theta \in \Theta$ and every $i=1,\ldots, n$, we are interested in estimating functions of the form
\begin{equation}
S_n(\theta)=\sum_{i=q+1}^n w^{i-1}(\theta,n)(G^{2}_{ir, r} - \pi^{i-1}(\theta))
\label{estim}\end{equation}
which we call {\em prediction-based estimating functions}.
The vector $w^{i-1}(\theta,n)=(w_k^{i-1}(\theta,n))_{k=1}^3$ has components $w_k^{i-1}(\theta,n)\in {\cal P}^\theta_{i-1}$ and $\pi^{i-1}(\theta)$ is the minimum mean square error predictor of $G^{2}_{ir, r}$ in ${\cal P}^\theta_{i-1}$, that is the orthogonal projection of $G^{2}_{ir, r}$ on ${\cal P}^\theta_{i-1}$. Such projection exist and it is uniquely determined by the normal equations 
$$
\E_\theta\left(\pi (G^{2}_{ir, r}-\pi^{i-1}(\theta))\right)=0 \quad \forall \pi \in {\cal P}^\theta_{i-1}.
$$

Define 
$C(\theta)$ the covariance matrix of the $q$ vector $(G_{(i-1)r, r}^{2}, \ldots, G_{(i-q)r, r}^{2})^T$ and $b(\theta)$ the vector whose components are $b_j(\theta)=\cov_\theta(G_{(i-j) r, r}^{2}, G^{2}_{ir, r} )$ for $j=1,\ldots, q$.

As the increment process $G_{ir,r}$ is stationary the matrix  $C(\theta)$ and the vector $b(\theta)$ do not depend on $i$. We define the vector $a(\theta)=C(\theta)^{-1}b(\theta)$  whose components are denoted by $a_j(\theta)$ for $j=1,\ldots , q$ 
 and the scalar 
$a_0(\theta)=\E_\theta G^{2}_{ir, r}  - \sum_{j=1}^q a_j (\theta)\E_\theta(G_{(i-j)r, r}^{2})$. Moreover we denote by $\tilde{a}(\theta)$  the $q+1$ vector  $\tilde{a}(\theta)=(a_0(\theta),a_{1}(\theta),\ldots, a_{q}(\theta))^{T}$.

An explicit expression for the predictors is \cite{PredictionBased}
\begin{equation}\label{predictors}
\pi^{i-1}(\theta)=  a_0(\theta)+\sum_{j=1}^q a_j (\theta)G_{(i-j)r, r}^{2}
\end{equation}
and it can be derived also from the Durbin-Levinson algorithm (cf. \cite{BrockwellDavis,PredictionBasedNew}).

As the components $w_k^{i-1}(\theta,n)$ of the vector $w^{i-1}(\theta,n)$ are elements of ${\cal P}^\theta_{i-1}$, they can be decomposed as $w_k^{i-1}(\theta,n)=w^{i-1}_{k0}(\theta,n) + \sum_{j=1}^q
w^{i-1}_{kj}(\theta,n) G_{(i-j)r, r}^{2}$ for some scalars $w^{i-1}_{k0}(\theta,n)$ and $w^{i-1}_{kj}(\theta,n)$ $j=1,\ldots, q$ that we collect into the $p \times (q+1)$ matrices $W_{n}^{i-1}(\theta)$ whose elements are $w^{i-1}_{kl}(\theta,n)$ for $1\leq k\leq p$ and $0\leq l \leq q$.

With these notations the estimating function \eqref{estim} can be written as
$$
S_n(\theta)= \sum_{i=1}^n W_{n}^{i-1}(\theta) H^i(\theta)
$$
where  $H^{i}(\theta)$, $i=1,\ldots, n$, are $(q+1)-$vectors whose components are 
$$H^{i}_0(\theta) = G^{2}_{ir, r}-\- a_0(\theta)-a_1(\theta) G_{(i-1)r, r}^{2}- \cdots, - a_q(\theta) G_{(i-q)r, r}^{2}$$ 
and for  $k=1, \ldots, q$,
$$H^{i}_k(\theta)= G_{(i-k)r, r}^{2}(G^{2}_{ir, r}- a_0(\theta)-a_1(\theta)G_{(i-1)r, r}^{2} - \cdots - a_q(\theta)G_{(i-q)r, r}^{2}).$$ 
Since the increment process $G_{ir,r}$ is stationary, so is the vector $H^{i}(\theta)$ and there is no reason to give different weights for different $i$, thus we restrict our PBEFs to those that can be written in the form
\begin{equation}S_n(\theta)=  W_{n}(\theta)\sum_{i=1}^n H^i(\theta).
\label{estimating}\end{equation}

\begin{example}\label{esempio}\emph{
An intuitive example of estimating function is found minimizing the mean square prediction error
\begin{equation}
{\cal M}_n(\theta)=\sum_{i=q+1}^{n}\big(G^{2}_{ir, r}-\pi^{i-1}(\theta) \big)^{2}.\label{naive}
\end{equation}
An expression of the form \eqref{estimating} is found when searching for the critical points of \eqref{naive} by taking the derivatives. It gives for every $n$ the same weight matrix
\[W^{\text{MSPE}}(\theta)=(\partial_{\theta^T}\tilde{a}(\theta))^{T}.\]
}
\end{example}

\subsection{Asymptotic results}\label{as}
Let us introduce the vector $Z_{i}=(1,G_{(i-1)r, r}^{2}, \ldots, G_{(i-q)r, r}^{2})^T$, the matrix $\tilde C(\theta)= \E \big(Z^{i}(Z^{i})^{T}\big)$, and the matrix
$D(\theta)=-W(\theta)\tilde C(\theta)\partial_{\theta^T}\tilde{a}(\theta)$. In terms of such quantities we state the following conditions.

\begin{cond}
\begin{enumerate}
\item There exist a constant $\delta>0$ such that  $\E_\theta(G_1^{8+\delta})<\infty$.
\item The vector $\tilde{a}(\theta)$ and the matrix $W_{n}(\theta)$ are continuously differentiable with respect to $\theta$.
\item There exist a non-random matrix $W(\theta)$ such that for every compact set $K\subset\Theta$ 
\[W_n(\theta)\stackrel{\mathbb{P}_{\theta_{0}}}{\longrightarrow}W(\theta)\qquad\qquad  \partial_{\theta} W_n(\theta)\stackrel{\mathbb{P}_{\theta_{0}}}{\longrightarrow}\partial_{\theta}W(\theta)\]
uniformly for $\theta\in K$ as $n\longrightarrow\infty$.
\item The matrix  $D(\theta_{0})$ has full rank 3.
\item We have $W(\theta)\,\E_{\theta_{0}}\big(H^{i}(\theta)\big)\neq 0$ for any $\theta\neq \theta_{0}$.
\end{enumerate}\label{conditions}
\end{cond}

Since the increment process $G_{ir,r}$ is stationary and exponentially $\alpha$-mixing, denoting by $\mathcal{N}_{d}\left(\mu,\Sigma \right)$ a $d$-dimensional Gaussian random vector with mean $\mu$ e covariance matrix $\Sigma$, Theorem 4.3 in \cite{PredictionBasedNew} can be restated as follows.

\begin{thm} \label{asymptotics}
Assume that $\theta_{0}\in \text{int}\, \Theta$, and that Conditions \ref{conditions} are satisfied. Then a consistent estimator $\hat\theta_{n}$ exists that, with a probability tending to one as $n\rightarrow\infty$, solves the estimating equations $S_{n}(\hat\theta_{n})=0$ and it is unique in any compact $K\subseteq\Theta$ for which $\theta_{0}\in \text{int}\,K$. Moreover
\[\sqrt{n} \left(\hat{\theta}_{n}-\theta_{0}\right)\stackrel{\mathcal{D}}{\longrightarrow}\mathcal{N}_{3}\left(0,V(\theta_{0})\right)\]
with the matrix $V(\theta)$ given by
\[
V(\theta)=D^{-1}(\theta)W(\theta)M(\theta)W^{T}(\theta){D^{T}}^{-1}(\theta)
\]
and  $M(\theta)$ by
\[
M(\theta) = \E_\theta\big(H^{v}(\theta)H^{v}(\theta)^T \big)+\sum_{i=1}^{\infty} \Big[  \E_\theta\big(H^{v}(\theta)H^{v+i}(\theta)^T \big)+ \E_\theta\big(H^{v+i}(\theta)H^{v}(\theta)^T \big)\Big]\]
where the index $v$ can be  fixed to any value strictly greater than $q$ by stationarity.
\end{thm}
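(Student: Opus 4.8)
The plan is to verify that the hypotheses of Theorem~4.3 in \cite{PredictionBasedNew} hold in the present setting and to track how its conclusion specializes to the matrices $D$, $M$ and $V$ defined above. Write $Z_{i}=(1,G_{(i-1)r,r}^{2},\ldots,G_{(i-q)r,r}^{2})^{T}$; then $H^{i}(\theta)=Z_{i}\big(G_{ir,r}^{2}-\tilde a(\theta)^{T}Z_{i}\big)$, so $\partial_{\theta^{T}}H^{i}(\theta)=-Z_{i}Z_{i}^{T}\,\partial_{\theta^{T}}\tilde a(\theta)$ and hence $\E_{\theta}\,\partial_{\theta^{T}}H^{i}(\theta)=-\tilde C(\theta)\,\partial_{\theta^{T}}\tilde a(\theta)$. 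Testing the normal equations that characterize $\pi^{i-1}(\theta_{0})$ against the constant $1$ and against each $G_{(i-j)r,r}^{2}$ shows that every component of $H^{i}(\theta_{0})$ is centred, i.e. $\E_{\theta_{0}}H^{i}(\theta_{0})=0$.

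First I would assemble the probabilistic inputs. By the facts recalled in Section~\ref{model} (from \cite{kluppelbergJAP2004}), under the standing assumptions and the existence of an $(8+\delta)$-moment in Condition~\ref{conditions}(1), the return sequence $(G_{ir,r})_{i}$ is strictly stationary and exponentially $\alpha$-mixing; the sequences $(Z_{i})$, $(H^{i}(\theta))$ and $(\partial_{\theta^{T}}H^{i}(\theta))$ are fixed measurable functions of finitely many consecutive returns and so inherit these properties. Since the components of $H^{i}$ are polynomials of degree~$4$ in the returns, Condition~\ref{conditions}(1) gives $\E_{\theta}\|H^{i}(\theta)\|^{2+\varepsilon}<\infty$ for some $\varepsilon>0$; in particular the second moments of $Z_{i}$, hence $C(\theta)$, $b(\theta)$ and $\tilde C(\theta)$, are finite, and nonsingularity of $C(\theta)$ is implicit in the definition $a(\theta)=C(\theta)^{-1}b(\theta)$, so the predictor \eqref{predictors} and the vector $\tilde a(\theta)$ are well defined and, by Condition~\ref{conditions}(2), continuously differentiable.

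Then comes the standard estimating-equation argument. The ergodic theorem for stationary mixing sequences, combined with the uniform convergence of $W_{n}$ and $\partial_{\theta}W_{n}$ from Condition~\ref{conditions}(3), yields $\mathbb{P}_{\theta_{0}}$-a.s. and uniformly on compacts $n^{-1}S_{n}(\theta)\to W(\theta)\,\E_{\theta}H^{i}(\theta)$ and $n^{-1}\partial_{\theta^{T}}S_{n}(\theta)\to D(\theta)=-W(\theta)\tilde C(\theta)\partial_{\theta^{T}}\tilde a(\theta)$, the contribution of $\partial_{\theta}W_{n}$ to the latter vanishing at $\theta_{0}$ because $\E_{\theta_{0}}H^{i}(\theta_{0})=0$. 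Condition~\ref{conditions}(5) makes $\theta_{0}$ the unique zero of $\theta\mapsto W(\theta)\E_{\theta}H^{i}(\theta)$ on any admissible compact $K$ and Condition~\ref{conditions}(4) makes $D(\theta_{0})$ invertible, so the usual argument of \cite{PredictionBasedNew} produces, with probability tending to one, a consistent solution $\hat\theta_{n}$ of $S_{n}(\hat\theta_{n})=0$, unique in $K$. Finally, a first-order Taylor expansion $0=S_{n}(\theta_{0})+\partial_{\theta^{T}}S_{n}(\bar\theta_{n})(\hat\theta_{n}-\theta_{0})$ with $\bar\theta_{n}$ on the segment $[\theta_{0},\hat\theta_{n}]$, together with the central limit theorem for stationary exponentially $\alpha$-mixing sequences applied to $n^{-1/2}\sum_{i}H^{i}(\theta_{0})$ (legitimate since this sequence is centred, has a $(2+\varepsilon)$-moment, and has summable mixing coefficients, which also makes the autocovariance series in $M(\theta_{0})$ absolutely convergent) and Slutsky's lemma with $n^{-1}\partial_{\theta^{T}}S_{n}(\bar\theta_{n})\to D(\theta_{0})$, gives $\sqrt{n}(\hat\theta_{n}-\theta_{0})\stackrel{\mathcal{D}}{\longrightarrow}\mathcal{N}_{3}\big(0,D^{-1}(\theta_{0})W(\theta_{0})M(\theta_{0})W^{T}(\theta_{0})D^{-T}(\theta_{0})\big)$, which is the claim.

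The only genuinely COGARCH-specific point, and the place that needs care, is to confirm that the $(8+\delta)$-moment of the returns posited in Condition~\ref{conditions}(1) is compatible with the parameter constraints: through \eqref{Psi} and the stationarity criterion of Section~\ref{model} it forces a bound such as $\Psi(4)<0$ together with finiteness of the relevant L\'evy moments, it is precisely what makes $H^{i}(\theta)H^{i}(\theta)^{T}$ integrable so that $M(\theta)$ is finite, and the extra $\delta$ supplies the $(2+\varepsilon)$-moment required by the mixing central limit theorem. Everything else is a transcription of the general PBEF asymptotics.
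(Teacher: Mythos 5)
Your argument is correct and follows the same route as the paper, which simply invokes Theorem 4.3 of \cite{PredictionBasedNew} after noting that the increment process is stationary and exponentially $\alpha$-mixing; you have merely unpacked the internals of that cited theorem (centredness of $H^{i}(\theta_{0})$ via the normal equations, the law of large numbers and mixing CLT, the Taylor/Slutsky sandwich yielding $D^{-1}WMW^{T}D^{-T}$) and verified its hypotheses from Conditions \ref{conditions}. The identification $\E_{\theta}\,\partial_{\theta^{T}}H^{i}(\theta)=-\tilde C(\theta)\,\partial_{\theta^{T}}\tilde a(\theta)$ and the observation that the $(8+\delta)$-moment of the returns supplies the $(2+\varepsilon)$-moment of the degree-four statistics $H^{i}$ are exactly the points the paper leaves implicit, so your write-up is a faithful and complete expansion rather than a different proof.
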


\begin{remark}
Knowing all simple and joint moments up to the order \emph{four} $\E\big(G_{jr, r}^{2} G^{2}_{ir, r}\big), \E\big(G_{jr, r}^{4}\big), \E\big(G_{jr, r}^{2}\big)$ for any integer $i,j$ is essential to calculate the predictors and hence to calculate any estimating function in the form \eqref{estimating}. Such explicit expressions for the COGARCH(1,1) model are given in \cite{kluppelbergEJ2007}.
However the asymptotic variance of the estimates involves the matrix $M$ which depends on all the simple and joint moments up to the order \emph{eight}, e.g. $\E\big(G_{jr, r}^{8}\big)$, $\E\big(G_{jr, r}^{6}\big)$, $\E\big(G_{jr, r}^{2}G^{6}_{ir, r}\big)$, $\E\big(G_{ir, r}^{2}G^{2}_{jr, r}G_{kr, r}^{2}G_{hr, r}^{2}\big)$ and similar.
Explicit expressions for such moments are currently not available and finding such expressions is the goal of Section \ref{moment}.
\end{remark}

\subsection{Optimal estimating functions}
\label{OPBE}
According to the general theory (cf. \cite{godambeheyde, PredictionBased,PredictionBasedNew} ), among all the PBEFs in the form \eqref{estimating} it is possible to select an \emph{optimal} one. 
The optimal PBEF will be such that the corresponding estimator has the smallest possible asymptotic variance.
The  weight matrix of the optimal PBEF is
\begin{equation}W^{\ast}_{n}=\partial_{\theta}\tilde{a}^{T}(\theta) \tilde C(\theta) M_{n}^{-1}(\theta)\label{optweights1}\end{equation}
where the matrix $M_{n}(\theta)$ given by
\begin{align}
&M_n(\theta) = \E_\theta(H^{q+1}(\theta)H^{q+1}(\theta)^T )+\notag\\
&\sum_{k=1}^{n-(q+1)}  \frac{n-q-k}{n-q}\left(  \E_\theta(H^{q+1}(\theta)H^{q+1+k}(\theta)^T )+ \E_\theta(H^{q+1+k}(\theta)H^{q+1}(\theta)^T )\right).\label{matriceMn}
\end{align}
Since for $n\rightarrow\infty$, $M_n(\theta) \rightarrow M(\theta)$, the matrix $M(\theta)$ can be used in the weights \eqref{optweights1} so that
\begin{equation}W^{\ast}=\partial_{\theta}\tilde{a}^{T}(\theta) \tilde C(\theta) M^{-1}(\theta)\label{optweights2}\end{equation}
and $W^{\ast}_{n}\rightarrow W^{\ast}$ as in the first requirements of Condition \ref{conditions}. The asymptotic variance of the corresponding  estimators is given by $V^\ast$, the inverse of the following matrix
\[{V^\ast}^{-1}=\partial_{\theta}\tilde{a}^{T}(\theta) \tilde C(\theta) M^{-1}(\theta)\tilde C(\theta)\partial_{\theta^{T}}\tilde{a}(\theta) \]

\begin{remark}
The optimal weight matrix $W^\ast$ depends on all the simple and joint moments up to the order \emph{eight}. Explicit expressions for such moments are currently not available and finding such expressions is the goal of  Section \ref{moment}.
\end{remark}

\begin{remark}
In term of the existence of higher moments, the condition requested for the asymptotic normality of the estimators obtained via PBEF and via the MM is the same ($\E_\theta(G_1^{8+\delta})<\infty$ for some $\delta>0$), see  \cite{kluppelbergEJ2007}.
\end{remark}

\begin{remark}
The calculation of the matrix $M_n(\theta)$ is computationally very demanding. This is even more relevant when, to find the zeros of the estimating function, it needs to be evaluated many times in a numerical optimization algorithm. However if the process is exponentially mixing (cf. \cite{PredictionBasedNew}), the convergence of $M_n(\theta)$ to $M(\theta)$ is very fast and a truncation of the sum in \eqref{matriceMn} that includes only the most relevant terms provides a good approximation.
\end{remark}

\section{Higher order moment structure}
\label{moment}
In this section we give conditions that assure the existence of simple and joint moments of the process $G_{t,r}$ up to any fixed order $k$, and we show how they can be calculated using an iterative procedure.  
\subsection{Notations}
Whenever we refer to the quadratic variation of the driving L\'evy process $L_{t}$ we denote it simply $[L]_{t}$. We reserve the less compact standard notation $[M,N]_{t}$ for the quadratic covariation of two semimartingales $M_{t}$ and $N_{t}$. Moreover, we often need to take quadratic covariations of quadratic variations and to this aim we introduce the following notation: quadratic variations of order $i+1$ for any $i\geq 1$ are defined as $[L]^{(i+1)}_{t}=\big[[L]^{(i)},[L]^{(1)}\big]_{t}$. With this notation if $[L]^{(1)}_{t}=L_{t}$, we have $[L]^{(2)}_{t}=[L]_{t}$, $[L]^{(3)}_{t}=\big[[L],L\big]_{t}$ and so on.

For $i>2$ the quadratic variations $[L]^{(i)}_{t}$ do not have any continuous component and we have  (cf. \cite{applebaum} Section 4.4.3),
\[ [L]^{(i)}_{t}=\int_{\mathbb{R}}x^{i}N(t,dx) \qquad \E[L]^{(i)}_{t}=t\int_{\mathbb{R}}x^{i}\nu(dx).\]

However, in some iterative formula below where an index $i$ ranges between different values we will write $[L]^{d(i)}_{t}$ to keep track of the fact that when $i=2$ the object we mean is the discrete part of the quadratic variation. Let us also remark that for any $i\geq1$ and $j\geq1$ such that $i+j=k$, we have that $\big[[L]^{(i)},[L]^{(j)}\big]_{t}=[L]^{(k)}_{t}$.

\subsection{Higher moments of COGARCH(1,1)}
Let us start with two Lemmas that will be used repetitively  in the next sections.
\begin{lemma}
\label{LemmasigmaG}
Given a COGARCH(1,1) model \eqref{sde}, then for every integer $k$, it holds that
\begin{equation}
\label{sigma2k}
\sigma^{2k}_{t}=k\int_{0}^{t}\sigma^{2k-2}_{s-}(\beta-\eta\sigma^{2}_{s-})\,ds+ \sum_{i=1}^{k} \binom{k}{i}\phi^{i}\int_{0}^{t}\sigma^{2k}_{s-} d [L]^{d(2i)}_{s}
\end{equation}
and
\begin{equation}
G^{2k}_t=\sum_{i=1}^{2k} \binom{2k}{i}\int_{0}^{t}  G^{2k-i}_{s-}\sigma^{i}_{s-}d[L]_s^{(i)}\label{G2k}\end{equation}
\end{lemma}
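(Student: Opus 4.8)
The plan is to derive both identities by applying It\^o's formula for semimartingales with jumps to the functions $x\mapsto x^{k}$ (applied to $\sigma^{2}_{t}$) and $x\mapsto x^{2k}$ (applied to $G_{t}$), and then to rewrite the resulting integrals in terms of the higher quadratic variations $[L]^{(i)}$ introduced in the Notations subsection.

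For \eqref{sigma2k}, I would start from the SDE $d\sigma^{2}_{t}=(\beta-\eta\sigma^{2}_{t-})\,dt+\phi\sigma^{2}_{t-}\,d[L]^{d}_{t}$ and apply the change-of-variable formula for pure-jump-plus-drift semimartingales to $f(x)=x^{k}$. The continuous (finite-variation) part contributes $k\int_{0}^{t}\sigma^{2k-2}_{s-}(\beta-\eta\sigma^{2}_{s-})\,ds$. The jump part contributes $\sum_{0<s\le t}\big((\sigma^{2}_{s})^{k}-(\sigma^{2}_{s-})^{k}\big)$; at a jump time the volatility jumps by $\Delta\sigma^{2}_{s}=\phi\sigma^{2}_{s-}(\Delta L_{s})^{2}$, so $(\sigma^{2}_{s})^{k}=\sigma^{2k}_{s-}(1+\phi(\Delta L_{s})^{2})^{k}$ and hence the jump increment equals $\sigma^{2k}_{s-}\sum_{i=1}^{k}\binom{k}{i}\phi^{i}(\Delta L_{s})^{2i}$ by the binomial theorem. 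Summing over jumps and recognizing $\sum_{0<s\le t}(\Delta L_{s})^{2i}=\int_{\mathbb{R}}x^{2i}N(t,dx)=[L]^{d(2i)}_{t}$ (with the convention that for $i=1$ this is the discrete part of $[L]$), one gets exactly the stated stochastic integral $\sum_{i=1}^{k}\binom{k}{i}\phi^{i}\int_{0}^{t}\sigma^{2k}_{s-}\,d[L]^{d(2i)}_{s}$.

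For \eqref{G2k}, since $G_{0}=0$ and $dG_{t}=\sigma_{t-}\,dL_{t}$, the process $G$ is a (purely discontinuous-plus-continuous, but in any case continuous-path-in-between-jumps) semimartingale with $\Delta G_{s}=\sigma_{s-}\Delta L_{s}$; applying the jump It\^o formula to $g(x)=x^{2k}$ gives $G^{2k}_{t}=2k\int_{0}^{t}G^{2k-1}_{s-}\,dG_{s}+k(2k-1)\int_{0}^{t}G^{2k-2}_{s-}\,d[G]^{c}_{s}+\sum_{0<s\le t}\big((G_{s})^{2k}-(G_{s-})^{2k}-2k G^{2k-1}_{s-}\Delta G_{s}\big)$. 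Using $\Delta G_{s}=\sigma_{s-}\Delta L_{s}$ and the binomial expansion of $(G_{s-}+\sigma_{s-}\Delta L_{s})^{2k}$, the $i=1$ jump terms combine with the $dG$ integral into $2k\int_{0}^{t}G^{2k-1}_{s-}\sigma_{s-}\,dL_{s}=2k\int_{0}^{t}G^{2k-1}_{s-}\sigma_{s-}\,d[L]^{(1)}_{s}$, the $i=2$ jump terms combine with the continuous bracket into $\binom{2k}{2}\int_{0}^{t}G^{2k-2}_{s-}\sigma^{2}_{s-}\,d[L]_{s}=\binom{2k}{2}\int_{0}^{t}G^{2k-2}_{s-}\sigma^{2}_{s-}\,d[L]^{(2)}_{s}$ (since $d[G]^{c}_{s}=\sigma^{2}_{s-}\,d[L]^{c}_{s}$ and the $i=2$ jump sum supplies the discrete part), and for $i\ge3$ the pure-jump sums $\sum_{0<s\le t}G^{2k-i}_{s-}\sigma^{i}_{s-}(\Delta L_{s})^{i}$ are exactly $\int_{0}^{t}G^{2k-i}_{s-}\sigma^{i}_{s-}\,d[L]^{(i)}_{s}$ by the identification $[L]^{(i)}_{t}=\int_{\mathbb{R}}x^{i}N(t,dx)$ for $i>2$ recalled from the Notations. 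Collecting all the $\binom{2k}{i}$ coefficients yields \eqref{G2k}.

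The routine parts are the binomial bookkeeping and the integrability checks needed to legitimately interchange summation over jumps with integration (handled by the existence assumptions on moments that accompany the later results, or by a localization argument). The one genuinely delicate point I expect to be the main obstacle is the careful matching of the \emph{continuous} quadratic-variation contribution with the $i=2$ \emph{jump} contribution: the It\^o formula produces $k(2k-1)\int G^{2k-2}_{s-}\,d[G]^{c}_{s}$ from the continuous bracket and a separate $\binom{2k}{2}\int G^{2k-2}_{s-}\sigma^{2}_{s-}\,d[L]^{d}_{s}$ from the jumps, and one must verify these assemble into a single integral against $d[L]^{(2)}_{s}=d[L]_{s}=\tau^{2}\,ds+d[L]^{d}_{s}$ with the common coefficient $\binom{2k}{2}=k(2k-1)$; this is where the notational convention $[L]^{d(2)}=[L]^{d}$ versus $[L]^{(2)}=[L]$ has to be tracked with care, and it is precisely the reason the authors flagged it in the Notations subsection.
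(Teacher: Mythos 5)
Your proposal is correct, but it follows a genuinely different route from the paper. The paper proves both identities by induction on $k$: writing $\sigma^{2k}_{t}=\sigma^{2k-2}_{t}\cdot\sigma^{2}_{t}$ and $G^{2k}_{t}=G^{2}_{t}\,G^{2(k-1)}_{t}$, applying It\^o's \emph{product} formula at each step, computing the quadratic covariation of the two stochastic integrals via $\big[[L]^{(i)},[L]^{(j)}\big]=[L]^{(i+j)}$, and then collapsing the coefficients with Pascal's rule. You instead apply the general change-of-variable formula directly to $x\mapsto x^{k}$ (for $\sigma^{2}$, which is a drift-plus-jumps finite-variation process, so no second-order term appears) and to $x\mapsto x^{2k}$ (for $G$, where the continuous bracket does appear), and you read off the coefficients from a single binomial expansion of the jump increments. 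Both arguments are sound; yours is more direct and replaces the inductive Pascal-rule bookkeeping with one binomial identity, while the paper's product-formula recursion has the advantage of producing along the way exactly the covariation identities that are reused verbatim in the proofs of Lemma \ref{3.2} and Theorem \ref{att_condizionali} (the expansion of $G^{2i}\sigma^{2(k-i)}$ there is again a product-formula computation of the same shape). You correctly identify the one delicate point, namely that the continuous bracket contribution $k(2k-1)\int G^{2k-2}_{s-}\,d[G]^{c}_{s}=\binom{2k}{2}\tau^{2}\int G^{2k-2}_{s-}\sigma^{2}_{s-}\,ds$ must be merged with the $i=2$ jump sum to reconstitute the full $[L]^{(2)}=[L]=\tau^{2}t+[L]^{d}$, whereas the $\sigma^{2}$-equation only ever sees the discrete parts $[L]^{d(2i)}$; this matching is exactly what the paper's convention in the Notations subsection is designed to track, and it is handled implicitly in the paper by the covariation rule for the product formula. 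The integrability issues you flag are indeed routine here, since for $i\geq 2$ the jump sums $\sum_{0<s\leq t}|\Delta L_{s}|^{i}$ converge pathwise, so both identities hold almost surely without moment assumptions.
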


\begin{proof}
We prove formula \eqref{sigma2k} by induction. For $k=1$ it is true.
Let us suppose \eqref{sigma2k} holds for $k-1$,  that is:
\[\sigma^{2k-2}_{t}=(k-1)\int_{0}^{t}\sigma^{2k-4}_{s-}(\beta-\eta\sigma^{2}_{s-})\,ds+ \sum_{i=1}^{k-1} \binom{k-1}{i}\phi^{i}\int_{0}^{t}\sigma^{2k-2}_{s-} d [L]^{d(2i)}_{s}.\]
Then by Ito product formula (see \cite{applebaum} Theorem 4.4.13), the \eqref{sde} and equation (4.15) in \cite{applebaum} p. 257 or Theorem 29 in \cite{protter} we obtain 
\[\begin{aligned}\sigma^{2k}_{t}&=\int_{0}^{t} \sigma^{2k-2}_{s-} d\sigma^{2}_{s}+\int_{0}^{t} \sigma^{2}_{s-} d\sigma^{2k-2}_{s}+ [\sigma^{2k-2},\sigma^{2}]_{t}=\\
&=k\int_{0}^{t}\sigma^{2k-2}_{s-}(\beta-\eta\sigma^{2}_{s-})\,ds + \phi \int_{0}^{t}\sigma^{2k}_{s-} d [L]^{d}_{s}+\\
&\phantom{=}+\sum_{i=1}^{k-1} \binom{k-1}{i}\phi^{i}\int_{0}^{t}\sigma^{2k}_{s-} d [L]^{d(2i)}_{s} + \sum_{i=1}^{k-1} \binom{k-1}{i}\phi^{i+1}\int_{0}^{t}\sigma^{2k}_{s-} d [L]^{d(2i+2)}_{s}=\\
&=k\int_{0}^{t}\sigma^{2k-2}_{s-}(\beta-\eta\sigma^{2}_{s-})\,ds + k \phi \int_{0}^{t}\sigma^{2k}_{s-} d [L]^{d}_{s}\\
&\phantom{=}+ \sum_{i=2}^{k-1} \left[\binom{k-1}{i} +\binom{k-1}{i-1}\right]\phi^{i}\int_{0}^{t}\sigma^{2k}_{s-} d [L]^{d(2i)}_{s} + \phi^{k}\int_{0}^{t}\sigma^{2k}_{s-} d [L]^{d(2k)}_{s}.\\
\end{aligned}\]
The result follows by the well known Pascal's rule for the binomial coefficients. So equation \eqref{sigma2k} is proved.

The identity \eqref{G2k} was proved for $k=1$ and $k=2$ in \cite{kluppelbergEJ2007}. For any $k>2$ it follows by induction writing $G^{2k}_t$ as $G^{2}_tG^{2(k-1)}_t$ and applying Ito's product formula. Algebraic manipulations with repeated use of Pascal's rule are needed to simplify the coefficients.
\end{proof}

\begin{lemma}\label{3.2}
Let $k\geq 2$ be any fixed integer. Assume Condition \ref{ass1} holds. If $\E(L^{2k}_{1})<\infty$, $\Psi(k-1)<0$, and for any integer $2\leq c\leq k$, $\int_{\mathbb{R}}x^{2c-1}d\nu(x)=0$, then 
for every integer $ 1\leq i\leq k-1$ we have


\begin{equation}
\E(G_{t}^{2i}\sigma^{2(k-i)}_{t})= e^{t \Psi(k-i)}\int_0^t C_{ki}(s)e^{-s \Psi(k-i)}ds
\label{solEgs}
\end{equation}
where
\[\begin{aligned}C_{ki}(t)=&\beta(k-i)\E\left(G_{t}^{2i}\sigma^{2(k-i)-2}_{t}\right)+\sum_{j=1}^{i} \binom{2i}{2j}  \E\left(G^{2i-2j}_{t}\sigma^{2(k-i)+2j}_{t}\right)\E\left([L]_{1}^{(2j)}\right)\\
&+\sum_{j=1}^{k-i} \binom{k-i}{j} \sum_{h=1}^{i} \binom{2i}{2h} \phi^{j} \E\left([L]_{1}^{(2j+2h)}\right)
\E\left(G^{2i-2h}_{t} \sigma^{2(k-i)+2h}_{t}\right).
\end{aligned}\]
\end{lemma}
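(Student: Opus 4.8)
The plan is to combine the two identities from Lemma~\ref{LemmasigmaG} via the Itô product formula applied to the product $G_t^{2i}\sigma_t^{2(k-i)}$, take expectations, and recognize the resulting integral equation as a linear first-order ODE for the scalar function $m_{ki}(t):=\E\big(G_t^{2i}\sigma_t^{2(k-i)}\big)$, which then integrates to \eqref{solEgs}.

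Concretely, I would first fix $i$ with $1\le i\le k-1$ and write the product rule
\[
G_t^{2i}\sigma_t^{2(k-i)} = \int_0^t \sigma_{s-}^{2(k-i)}\, d\big(G_s^{2i}\big) + \int_0^t G_{s-}^{2i}\, d\big(\sigma_s^{2(k-i)}\big) + \big[G^{2i},\sigma^{2(k-i)}\big]_t .
\]
Into the first two integrals I substitute the integral representations \eqref{G2k} (with $2k$ replaced by $2i$) and \eqref{sigma2k} (with $k$ replaced by $k-i$). The quadratic covariation term is where most of the bookkeeping lives: $G^{2i}$ is driven by the measures $d[L]^{(j)}$ for $j=1,\dots,2i$ and $\sigma^{2(k-i)}$ by $d[L]^{d(2\ell)}$ for $\ell=1,\dots,k-i$, and since $\big[[L]^{(a)},[L]^{(b)}\big]_t=[L]^{(a+b)}_t$ (the identity stated at the end of the Notations subsection), the cross term becomes a sum of stochastic integrals against $d[L]^{(j+2\ell)}$ with integrand $G_{s-}^{2i-j}\sigma_{s-}^{2(k-i)+2\ell}$ times the product of the two binomial/$\phi$ coefficients. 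The odd-$j$ contributions (both from \eqref{G2k} directly and from the covariation) will have to be shown to vanish in expectation; this is exactly the role of the hypothesis $\int_{\mathbb R} x^{2c-1}\,d\nu(x)=0$ for $2\le c\le k$ together with Condition~\ref{ass1} (which kills the $j=1$ term since $\E L_1=0$), because $\E[L]^{(j)}_t = t\int x^j\,d\nu$ for $j\ge 3$.

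Next I take expectations, using that each stochastic integral against $d[L]^{(j)}_s$ (resp. the drift $ds$) contributes its compensator, i.e. $\E\int_0^t f_{s-}\,d[L]^{(j)}_s = \E[L]_1^{(j)}\int_0^t \E f_{s-}\,ds$ by stationarity-in-increments of the Lévy driver and Fubini; this requires an integrability justification, for which I would invoke $\E(L_1^{2k})<\infty$ together with $\Psi(k-1)<0$ to guarantee that all the moments $\E\big(G_s^{2a}\sigma_s^{2b}\big)$ appearing (with $a+b\le k$, and in the $\sigma$-drift term $a+b=k$ but with one fewer $\sigma$-power, hence effectively order $k-1$) are finite and locally bounded — this finiteness is presumably established in an earlier lemma or follows from \eqref{sigma_infinito} and the explicit solution \eqref{conditional}. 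After taking expectations the surviving terms regroup: the even covariation pieces and the even part of \eqref{G2k}'s expansion combine into exactly the three families in $C_{ki}(t)$ — the $\beta(k-i)$ drift term from the $\sigma$-equation, the $\binom{2i}{2j}\E([L]_1^{(2j)})$ terms from the pure-$G$ part, and the double sum $\binom{k-i}{j}\binom{2i}{2h}\phi^j\E([L]_1^{(2j+2h)})$ from the covariation — while the single term $-\eta(k-i)\,\E\big(G_s^{2i}\sigma_s^{2(k-i)}\big)=-\eta(k-i)m_{ki}(s)$ and the $j=0$-type diagonal term from the covariation assemble into $\Psi(k-i)\,m_{ki}(s)$, using the definition \eqref{Psi} of $\Psi$ — that is, $\Psi(k-i) = -\eta(k-i) + \sum_{j=1}^{k-i}\binom{k-i}{j}\phi^j\E[L]_1^{(2j)}$.

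That identification yields the integral equation $m_{ki}(t) = \int_0^t\!\big(\Psi(k-i)\,m_{ki}(s) + C_{ki}(s)\big)\,ds$, equivalently $m_{ki}'(t) = \Psi(k-i)\,m_{ki}(t) + C_{ki}(t)$ with $m_{ki}(0)=0$, whose variation-of-constants solution is precisely \eqref{solEgs}. I expect the main obstacle to be the combinatorial reconciliation in the previous paragraph: carefully expanding the covariation term, matching the measures $[L]^{(j+2\ell)}$ against the indices in $C_{ki}$, applying Pascal's rule where needed, and verifying that the ``diagonal'' pieces really sum to the closed form $\Psi(k-i)$ rather than leaving a residual term; a secondary but genuine technical point is the uniform integrability/Fubini step that licenses moving expectation inside the stochastic integrals, which hinges on first knowing all lower-order joint moments are finite under $\Psi(k-1)<0$ and $\E L_1^{2k}<\infty$.
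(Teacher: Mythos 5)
Your proposal is correct and follows essentially the same route as the paper: Itô's product formula applied to $G_t^{2i}\sigma_t^{2(k-i)}$ with the two representations of Lemma \ref{LemmasigmaG} substituted in, the covariation expanded via $\big[[L]^{(a)},[L]^{(b)}\big]=[L]^{(a+b)}$, expectations taken by the compensation formula with the odd moments killed by the symmetry hypothesis, and the resulting linear ODE $\frac{d}{dt}m_{ki}=\Psi(k-i)m_{ki}+C_{ki}$ solved by variation of constants from $m_{ki}(0)=0$. The regrouping of the $-\eta(k-i)$ drift with the diagonal jump terms into $\Psi(k-i)$ is exactly the paper's ``simplifying'' step.
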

\begin{proof}
Indeed, for the Ito product formula and for Lemma  \ref{LemmasigmaG} we can write
\begin{align}&G_{t}^{2i}\sigma^{2(k-i)}_{t}=\int_{0}^{t} G_{s-}^{2i}d\sigma^{2(k-i)}_{s} +\int_{0}^{t} \sigma^{2(k-i)}_{s-} dG_{s}^{2i} +[G^{2i},\sigma^{2(k-i)}]_{t}\notag\\
& =(k-i)\int_{0}^{t}G_{s-}^{2i}\sigma^{2(k-i)-2}_{s-}(\beta-\eta\sigma^{2}_{s-})\,ds+ \sum_{j=1}^{k-i} \binom{k-i}{j}\phi^{j}\int_{0}^{t}G_{s-}^{2i}\sigma^{2(k-i)}_{s-} d [L]^{d(2j)}_{s}\notag\\
&\phantom{m}+\sum_{j=1}^{2i} \binom{2i}{j}\int_{0}^{t}  G^{2i-j}_{s-}\sigma^{2(k-i)+j}_{s-}d[L]_s^{(j)}+\notag\\
&\phantom{m}+\left[\sum_{j=1}^{k-i} \binom{k-i}{j}\phi^{j}\int_{0}^{t}\sigma^{2(k-i)}_{s-} d [L]^{d(2j)}_{s}, \sum_{h=1}^{2i} \binom{2i}{h}\int_{0}^{t}  G^{2i-h}_{s-}\sigma^{h}_{s-}d[L]_s^{(h)}\right]\notag\\
&=-(k-i)\eta\int_{0}^{t}G_{s-}^{2i}\sigma^{2(k-i)}_{s-}\,ds+ \sum_{j=1}^{k-i} \binom{k-i}{j}\phi^{j}\int_{0}^{t}G_{s-}^{2i}\sigma^{2(k-i)}_{s-} d [L]^{d(2j)}_{s}\notag\\
&+\beta (k-i) \int_{0}^{t}G_{s-}^{2i}\sigma^{2(k-i)-2}_{s-}\,ds\phantom{m}+\sum_{j=1}^{2i} \binom{2i}{j}\int_{0}^{t}  G^{2i-j}_{s-}\sigma^{2(k-i)+j}_{s-}d[L]_s^{(j)}+\label{Gs}\\
&\phantom{m}+\sum_{j=1}^{k-i} \binom{k-i}{j} \sum_{h=1}^{2i} \binom{2i}{h} \phi^{j}\int_{0}^{t}\sigma^{2(k-i)+h}_{s-}  G^{2i-h}_{s-}d[L]_s^{(2j+h)}.\notag
\end{align}

Observe that both $G_{s-}=G_{s}$ and $\sigma^{2}_{s-}=\sigma^{2}_{s}$ almost surely. Taking the expectation, applying the compensation formula (see for example \cite[Theorem 4.4]{kyprianou}), differentiating with respect to $t$, and remembering \eqref{Psi} and that for any integer $2\leq c\leq k$, $\int_{\mathbb{R}}x^{2c-1}d\nu(x)=0$, we obtain
\[\begin{aligned}&\frac{d}{dt}\E\left(G_{t}^{2i}\sigma^{2(k-i)}_{t}\right)=\\
& =\Psi(k-1)\E\left(G_{t}^{2i}\sigma^{2(k-i)}_{t}\right)+\\
&\phantom{m}+\beta(k-i)\E\left(G_{t}^{2i}\sigma^{2(k-i)-2}_{t}\right)+\sum_{j=1}^{i} \binom{2i}{2j}  \E\left(G^{2i-2j}_{t}\sigma^{2(k-i)+2j}_{t}\right)\E\left([L]_{1}^{(2j)}\right)+\\
&\phantom{m}+\sum_{j=1}^{k-i} \binom{k-i}{j} \sum_{h=1}^{i} \binom{2i}{2h} \phi^{j} \E\left(G^{2i-2h}_{t} \sigma^{2(k-i)+2h}_{t}\right) \int_{\mathbb{R}} x^{(2j+2h)} d\nu(x)
\end{aligned}\]

Simplifying we obtain  
\[\frac{d}{dt}\E\left(G_{t}^{2i}\sigma^{2(k-i)}_{t}\right)=\Psi(k-i)\E\left(G_{t}^{2i}\sigma^{2(k-i)}_{t}\right)+C_{ki}(t),\]
and a stationary solution of this ode with initial condition $\E\left(G_{0}^{2i}\sigma^{2(k-i)}_{0}\right)=0$  exists if $\Psi(k-i)<0$ and is given by \eqref{solEgs}. 
This completes the proof. 
\end{proof}

\begin{thm}
Let $k\geq 1$ be any fixed integer. Assume Condition \ref{ass1} holds.  If $\Psi ( k) <0$, $\E(L_{1}^{2k}) <\infty$, and, for every integer $1\leq i\leq k$, 
$\int_{\mathbb{R}}x^{2i-1}d\nu(x)=0$, then 
\vspace{-2mm}
$$
\E G^{2k}_t=\sum_{i=1}^{k} \binom{2k}{2i} \E([L]_{1}^{(2i)}) \int_{0}^{t} \E \big(G^{2k-2i}_{s}\sigma^{2i}_{s}\big)ds. 
$$
\end{thm}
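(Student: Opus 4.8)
The plan is to start from the representation of $G^{2k}_t$ provided by Lemma \ref{LemmasigmaG}, namely formula \eqref{G2k},
\[
G^{2k}_t=\sum_{i=1}^{2k} \binom{2k}{i}\int_{0}^{t}  G^{2k-i}_{s-}\sigma^{i}_{s-}\,d[L]_s^{(i)},
\]
take expectations on both sides, and then simplify the resulting sum using the hypotheses. First I would note that $G_{s-}=G_s$ and $\sigma_{s-}=\sigma_s$ almost surely (the jumps of $L$ form a countable set), so that the integrands may be written without the left limits once we are inside an expectation. Then I would apply the compensation formula (as in \cite[Theorem 4.4]{kyprianou}) to each stochastic integral against $[L]^{(i)}_s=\int_{\mathbb R}x^i N(s,dx)$, replacing $d[L]^{(i)}_s$ by its compensator $\big(\int_{\mathbb R}x^i\,\nu(dx)\big)\,ds$ and using the independence/predictability structure so that $\E\!\int_0^t G^{2k-i}_{s-}\sigma^i_{s-}\,d[L]^{(i)}_s=\int_0^t\E\big(G^{2k-i}_s\sigma^i_s\big)\,ds\cdot\int_{\mathbb R}x^i\,\nu(dx)$. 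Here one has to be slightly careful for $i=2$, where $[L]^{(2)}=[L]$ has a continuous part $\tau^2 s$; but that continuous part contributes $\tau^2\int_0^t\E\big(G^{2k-2}_s\sigma^2_s\big)ds$, and combined with the jump part $\int_{\mathbb R}x^2\nu(dx)$ it reconstitutes $\E([L]_1^{(2)})=\tau^2+\int x^2\nu(dx)$, so the term still has the claimed form with the coefficient $\E([L]_1^{(2i)})$ for $i=1$.

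Next I would use the hypothesis that $\int_{\mathbb R}x^{2i-1}\,\nu(dx)=0$ for every integer $1\le i\le k$: this kills all the odd-indexed terms in the sum over $i=1,\dots,2k$, because for odd $i=2j-1\le 2k-1$ the compensator $\int x^{2j-1}\nu(dx)$ vanishes. Hence only the even values $i=2j$, $j=1,\dots,k$, survive, leaving
\[
\E G^{2k}_t=\sum_{j=1}^{k}\binom{2k}{2j}\Big(\int_{\mathbb R}x^{2j}\,\nu(dx)\Big)\int_0^t\E\big(G^{2k-2j}_s\sigma^{2j}_s\big)\,ds,
\]
which, after recognising $\int x^{2j}\nu(dx)=\E([L]_1^{(2j)})$ for $j\ge2$ and (as noted above) $\tau^2+\int x^2\nu(dx)=\E([L]_1^{(2)})$ for $j=1$, is exactly the asserted identity. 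To make all of this rigorous I would need to justify that the stochastic integrals are genuine martingales (so their expectation is zero) and that the compensation formula applies — this is where the integrability and mixing assumptions enter: $\E(L_1^{2k})<\infty$ together with $\Psi(k)<0$ guarantees, via \eqref{sigma_infinito} and Lemma \ref{3.2}, that all the moments $\E\big(G^{2k-2j}_s\sigma^{2j}_s\big)$ and $\E\sigma^{2k}_\infty$ are finite, so the integrands are integrable and the local martingale parts are true martingales.

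The main obstacle I anticipate is precisely this integrability bookkeeping: one must verify that each mixed moment $\E\big(G_s^{2i}\sigma_s^{2(k-i)}\big)$ appearing on the right-hand side is finite for all $0\le i\le k$ (not just the ones Lemma \ref{3.2} handles, which requires $\Psi(k-1)<0$ and assumes $i\le k-1$), and in particular that $\E G_t^{2k}$ itself is finite so that the left-hand side makes sense; the cases $i=0$ (pure $\sigma^{2k}$, controlled by \eqref{sigma_infinito} under $\Psi(k)<0$) and $i=k$ (pure $G^{2k}$) sit at the boundary and need the full strength of $\Psi(k)<0$ rather than $\Psi(k-1)<0$. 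Modulo that, the argument is a direct computation: write \eqref{G2k}, take expectations, compensate, discard the odd terms. I would present it in that order, flagging the continuous-part subtlety for $i=1$ and citing the compensation formula and the moment bounds from the earlier results.
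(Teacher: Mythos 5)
Your argument is exactly the paper's proof: take expectations in \eqref{G2k}, apply the compensation formula to each $\int_0^t G^{2k-i}_{s-}\sigma^i_{s-}\,d[L]^{(i)}_s$, and discard the odd-indexed terms using $\int_{\mathbb{R}}x^{2i-1}\,d\nu(x)=0$ (with the $i=1$ term vanishing by the martingale property of $L$). The extra care you take with the continuous part of $[L]^{(2)}$ and with integrability is just a more detailed write-up of the same one-line argument the paper gives.
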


\begin{proof} The result follows from \eqref{G2k} by taking the expectation and applying the compensation formula \cite[Theorem 4.4]{kyprianou}). Note that
$\E([L]^{2i-1})=\int_{\R} x^{(2i-1)} d\nu(x) =0$ for every integer $2\leq i\leq k$.
\end{proof}

\subsection{Higher conditional moments}
Conditional moments of the product are necessary not only to derive joint moments of higher order of the log returns, as we will do in the next section, but could be useful by itself and for this reason the result  is presented in this section. Let us remind that $\bE_{v}$ denotes conditional expectation with respect to the natural filtration ${\cal F}_{v}$.

\begin{thm} \label{att_condizionali}
For every $k$ and for any $0\leq i\leq  k$, for $h>0$, $s>0$  and given $0<v< s$, we have
\begin{equation}
\bE_{v} \left[G_{s,h}^{2i}\sigma^{2(k-i)}_{s+h}\right]=\sum_{r=0}^{k} J_{kir}(h,s-v)\, \sigma^{2r}_{v}
\label{G2isigma2k-i}
\end{equation}
where $G_{s,h}^{2i}=(G_{s+h}-G_s )^{2i}$ and the coefficients $J_{kir}(h,d)$ are deterministic and can be calculated recursively as follows. 

First
\begin{equation}
J_{k0k}(h,d)=\text{e}^{(h+d)\Psi(k)},
\label{Jk0k}
\end{equation}
then for every $1\leq i\leq k$
\begin{equation}
\begin{aligned}J_{k0(k-i)} (h,d)=&\frac{k!}{(k-i)!}  \beta^{i} \int_{0}^{h+d}ds_{i}\int^{s_{i}}_{0}ds_{i-1}\cdots\\
&\cdots\int_{0}^{s_{2}}\text{e}^{(h+d-s_{i})\Psi(k)+(s_{i}-s_{i-1})\Psi(k-1)+\cdots +s_{1}\Psi(k-i)} ds_{1}.\end{aligned}
\label{Jk0k-i}
\end{equation}

\medskip

For any fixed $k$ and $i<k$ the coefficients  $J_{kir}(h,d)$ can be derived as follows

\begin{align}J_{kik}(h,d) &= \text{e}^{h\Psi(k-i)}\int_{0}^{h} \text{e}^{-w\Psi(k-i)}\left[ \sum_{j=1}^{i} \E\left([L]_{1}^{(2j)}\right) \binom{2i}{2j} J_{k(i-j)k}(w,d)+\right. \notag\\
&+\left.\sum_{m=1}^{i}\binom{2i}{2m}J_{k(i-m)k} (w,d)\sum_{j=1}^{k-i}\binom{k-i}{j}\phi^{j} \E\left([L]_{1}^{(2j+2m)}\right)\right]\,dw. \phantom{accipicchio}\label{Jkik}
\raisetag{1.5cm}\end{align}


For any $r<k$

\begin{align}J_{kir}(h,d)&= \text{e}^{h\Psi(k-i)}\int_{0}^{h} \text{e}^{-w\Psi(k-i)}\Bigg[(k-i) \beta J_{(k-1)i r}(w,d)+\notag \\
&+ \sum_{j=1}^{i} \E\left([L]_{1}^{(2j)}\right)\binom{2i}{2j} J_{k(i-j)r}(w,d)+
\label{Jkir}
\\
&+\left.\sum_{m=1}^{i}\binom{2i}{2m}J_{k(i-m)r} (w,d)\sum_{j=1}^{k-i}\binom{k-i}{j}\phi^{j}\E\left([L]_{1}^{(2j+2m)}\right)\right]\,dw. \notag
 \end{align}

Finally, for any $r\leq k$ we have 
$$
J_{kkr}(h,d)= \sum_{j=0}^{k-1} \binom{2k}{2(k-j)}\E [L]_1^{(2(k-j))}  \int_{0}^{h}J_{kjr}(u,d) du. 
$$

\end{thm}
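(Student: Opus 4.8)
The plan is to prove the representation \eqref{G2isigma2k-i} and all the recursions for the coefficients $J_{kir}$ simultaneously, by a \emph{double induction}: an outer induction on the total order $k$, and, for each fixed $k$, an inner induction on the index $i$ running from $0$ up to $k$. Throughout write $d=s-v$. The only analytic input is the conditional counterpart of the computation already carried out for Lemma \ref{3.2}: one applies the It\^o product formula to $G_{s,h}^{2i}\sigma^{2(k-i)}_{s+h}$ viewed as a process in the forward variable $h$ (so that $dG_{s,h}=\sigma_{(s+h)-}\,dL_{s+h}$ and $G_{s,0}=0$), substitutes the decompositions of $\sigma^{2k}$ and $G^{2k}$ from Lemma \ref{LemmasigmaG}, takes $\bE_v$, applies the compensation formula, discards the odd-order terms through the hypothesis $\int_{\R}x^{2c-1}\,d\nu(x)=0$, and differentiates in $h$. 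By the Markov property of $(G,\sigma^2)$ and time-homogeneity, every conditional expectation that appears depends on $\mathcal F_v$ only through $\sigma^2_v$ and on the times only through $h$ and $d$; the point of the induction is to propagate the polynomial-in-$\sigma^2_v$ structure through the resulting linear ODEs in $h$.

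I would dispose of the two extreme indices first. For $i=0$ the quantity is $\bE_v\sigma^{2k}_{s+h}$, which by the Markov property and time-homogeneity is a function of $\sigma^2_v$ and of $h+d$ alone; taking $\bE_v$ in \eqref{sigma2k} and compensating, the $\eta$-drift together with the $[L]^{d(2j)}$-terms collapses, via \eqref{Psi}, into the single factor $\Psi(k)$, so that $\frac{d}{dt}\bE_v\sigma^{2k}_t=\Psi(k)\,\bE_v\sigma^{2k}_t+k\beta\,\bE_v\sigma^{2k-2}_t$ with initial value $\sigma^{2k}_v$ at $t=v$. Solving this triangular system downward in $k$ gives the homogeneous term $e^{(h+d)\Psi(k)}\sigma^{2k}_v$, that is \eqref{Jk0k}, and after $i$ iterations of the $\beta$-coupling the nested-integral coefficient of $\sigma^{2(k-i)}_v$, that is \eqref{Jk0k-i}. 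For $i=k$ one has $\sigma^0=1$; conditioning the increment form of \eqref{G2k}, compensating and dropping the odd-order integrals leaves
\[
\bE_v[G^{2k}_{s,h}]=\sum_{j=0}^{k-1}\binom{2k}{2(k-j)}\,\E([L]_1^{(2(k-j))})\int_0^h \bE_v[G^{2j}_{s,u}\sigma^{2(k-j)}_{s+u}]\,du .
\]
Each integrand has index $j<k$, hence by the inner induction equals $\sum_r J_{kjr}(u,d)\sigma^{2r}_v$; substituting and exchanging sum and integral yields the last displayed recursion for $J_{kkr}$.

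For the generic step $0<i<k$, the It\^o product formula reproduces the identity \eqref{Gs} of Lemma \ref{3.2} with $G_t$ replaced by $G_{s,h}$; conditioning, compensating, discarding the odd terms and differentiating in $h$ gives $\frac{d}{dh}\bE_v[G^{2i}_{s,h}\sigma^{2(k-i)}_{s+h}]=\Psi(k-i)\,\bE_v[G^{2i}_{s,h}\sigma^{2(k-i)}_{s+h}]+\widetilde C_{ki}(h)$, where $\widetilde C_{ki}$ is the forcing obtained from $C_{ki}$ by replacing each unconditional moment with its $\bE_v$-counterpart. The term $\beta(k-i)\,\bE_v[G^{2i}_{s,h}\sigma^{2(k-i)-2}_{s+h}]$ has total order $k-1$, so by the outer induction it equals $\sum_r \beta(k-i)J_{(k-1)ir}(h,d)\sigma^{2r}_v$, while all the remaining summands of $\widetilde C_{ki}$ carry an index strictly below $i$ and by the inner induction equal $\sum_r J_{k(i-j)r}(h,d)\sigma^{2r}_v$. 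Hence $\widetilde C_{ki}(h)=\sum_{r=0}^{k}c_r(h)\,\sigma^{2r}_v$ with deterministic $c_r$; the ODE is linear with zero initial datum (since $i\ge 1$ forces $G_{s,0}=0$), and its unique solution is $\sum_r \sigma^{2r}_v\,e^{h\Psi(k-i)}\int_0^h e^{-w\Psi(k-i)}c_r(w)\,dw$. Reading off the coefficient of $\sigma^{2r}_v$ gives \eqref{Jkir}; for $r=k$ the $\beta$-term disappears, because a polynomial of degree $\le k-1$ in $\sigma^2_v$ has no $\sigma^{2k}_v$ term, i.e.\ $J_{(k-1)ik}\equiv 0$, which is exactly why \eqref{Jkik} is \eqref{Jkir} with the $\beta$ contribution removed. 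The base cases ($k=0$, and $k=1$, which reduces to Lemma \ref{3.2}, the formula for $\E G^{2k}_t$ stated above, and \eqref{conditional}) are immediate, and finiteness of all the moments that occur — needed to apply the compensation formula and to differentiate under $\bE_v$ — follows from $\Psi(k)<0$, $\E L_1^{2k}<\infty$, stationarity of $\sigma^2$ and the explicit representation \eqref{conditional}, a standard localization handling the stochastic integrals.

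The main obstacle is organizational rather than conceptual: the double induction has to be arranged so that every term on the right-hand side of each ODE is already available — either at total order $k-1$, or at order $k$ with a strictly smaller index, or as the $i=0$ base — and one must track the polynomial degree in $\sigma^2_v$ carefully, in particular the vanishing $J_{(k-1)ik}\equiv 0$ that distinguishes \eqref{Jkik} from \eqref{Jkir}. The only genuinely technical ingredient is the integrability and localization bookkeeping needed to push $\bE_v$ through the stochastic integrals; the binomial-coefficient manipulations (repeated use of Pascal's rule) that bring the raw output of It\^o's formula into the compact form of $C_{ki}$ are routine and identical to those already performed for Lemma \ref{3.2}.
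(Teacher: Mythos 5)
Your proposal is correct and follows essentially the same route as the paper's proof: a double induction (outer on the total order $k$, inner on $i$ from $0$ to $k$), with the generic step $0<i<k$ handled by the It\^o-product-formula ODE in $h$ and the $i=k$ step by the compensation formula applied to the increment analogue of \eqref{G2k}. The only cosmetic difference is that you obtain the $i=0$ coefficients by iterating the triangular ODE system for $\bE_{v}\sigma^{2k}_{t}$, whereas the paper expands the $k$-th power of the explicit solution \eqref{conditional} and uses the Laplace transform of $X$; both yield the same nested integrals \eqref{Jk0k-i}.
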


\begin{proof}
Fix $k$. Let us start to prove equation \eqref{G2isigma2k-i} for $i=0$. To calculate 
$
\bE_{v} \left(\sigma^{2k}_{t}\right)
$ we apply formula \eqref{conditional} with initial condition at time $v$.
For every $v\leq s_{1}\leq\cdots\leq s_{k}\leq t$, we have
$$
\begin{aligned}
\sigma^{2k}_{t}&=\left(\beta\int_{v}^{t}\text{e}^{-(X_{t}-X_{s})} \, ds + \text{e}^{-(X_{t}-X_{v})} \sigma^{2}_{v}\right)^{k}\\
&= \text{e}^{-k(X_{t}-X_{v})} \sigma^{2k}_{v}\\
&+\sum_{i=1}^{k} \binom{k}{i}  \text{e}^{-(k-i)(X_{t}-X_{v})} \sigma^{2(k-i)}_{v} \beta^{i} \int_{v}^{t}\text{e}^{-(X_{t}-X_{s_{1}})} \, ds_{1}\cdots \int_{v}^{t}\text{e}^{-(X_{t}-X_{s_{i}})} \, ds_{i}\\
&=\text{e}^{-k(X_{t}-X_{v})} \sigma^{2k}_{v}+\sum_{j=1}^{k} \binom{k}{j} \sigma^{2(k-j)}_{v} \beta^{j} \cdot j!\cdot\\
&\int_{v}^{t}ds_{j}\int^{s_{j}}_{v}ds_{j-1}\cdots\int_{v}^{s_{2}}\text{e}^{-k(X_{t}-X_{s_{j}})}\text{e}^{-(k-1)(X_{s_{j}}-X_{s_{j-1}})} \cdots \text{e}^{-(k-j)(X_{s_{1}}-X_{v})}ds_{1}.
\end{aligned}
$$
The increments $X_{t}-X_{v}$ are independent of ${\cal F}_v$ and of $\sigma_v^2$ which is ${\cal F}_v$-measurable. Time homogeneity of $X_{t}$ ensures that $X_{t}-X_{v}\stackrel{D}{=}X_{t-v}$. Then taking the conditional expectation with respect to ${\cal F}_v$,  by equation \eqref{Psi}, we get
$$
\begin{aligned}
\bE_{v}\left(\sigma^{2k}_{t}\right)&=
\text{e}^{(t-v) \Psi(k)} \sigma^{2k}_{v}+\sum_{j=1}^{k} \binom{k}{j} \sigma^{2(k-j)}_{v} \beta^{j} \cdot j! \cdot\\
&\cdot \int_{v}^{t}ds_{j}\int^{s_{j}}_{v}ds_{j-1}\cdots\int_{v}^{s_{2}}\text{e}^{(t-s_{j})\Psi(k)} \text{e}^{(s_{j}-s_{j-1})\Psi(k-1)}  \cdots \text{e}^{(s_{1}-v)\Psi(k-j)} ds_{1}.
\end{aligned}
$$
that gives the thesis once observed that the coefficients in \eqref{G2isigma2k-i} with $i=0$ are actually dependent only on
the sum of their arguments. 

Now let us prove equation \eqref{G2isigma2k-i} for $i=k=1$. By the Ito product formula

\begin{align} G_{s,h}^{2}=&(G_{s+h}-G_s )^{2}=\left( \int _s^{s+h} \sigma_{u-} dL_u\right)^2 =\notag \\
=&2\int_{0}^{h}(G_{(s+u)-}-G_{s-})\sigma_{s+u}dL_{s+u}+\int_{0}^{h}\sigma^{2}_{s+u} d [L]_{s+u}\notag\end{align}
and by the compensation formula for the conditional expectation \cite[Corollary 4.5]{kyprianou}
\[\bE_r G_{s,h}^{2}= \E([L]_{1}) \int_{0}^{h}\bE_r(\sigma^{2}_{s+u}) d u.\]
Now let us assume as inductive hypothesis  that \eqref{G2isigma2k-i} holds for any given integer value $k\leq a-1$ and for all
$i\leq k$. We have to show that it holds also for $k=a$ and all $i\leq k$. Let us start to notice that for $k=a$ and $i=0$ this has already been proved. So it is enough to prove that equation  \eqref{G2isigma2k-i} for $k=a$ and all $i\leq b-1<a$ implies 
 equation  \eqref{G2isigma2k-i} with $k=a$ and $i=b$. 
 For every $k$, by writing $G_{s,h}^{2k}= G_{s,h}^{2(k-1)}G_{s,h}^{2}$ and applying the Ito product formula, we have in analogy with \eqref{G2k}
\begin{equation}G_{s,h}^{2k}=\sum_{i=1}^{2k} \binom{2k}{i}\int_{0}^{h}  (G_{(s+u)-}-G_{s})^{2k-i}\sigma^{i}_{s+u}d[L]_{s+u}^{(i)}.\label{G2kdoppia}\end{equation}
With the analogous calculations that lead to formula \eqref{Gs},  Ito product formula guarantees that (denoting the increment $G_{(s+h)-}-G_{s}$ by $G_{s,h-}$) 
 \begin{align}G_{s,h}^{2b}\sigma^{2(a-b)}_{s+h}&=-(a-b)\eta\int_{0}^{h}G_{s,u-}^{2b}\sigma^{2(a-b)}_{(s+u)-}\,du+ \sum_{j=1}^{a-b} \binom{a-b}{j}\phi^{j}\int_{0}^{h}G_{s,u-}^{2b}\sigma^{2(a-b)}_{(s+u)-} d [L]^{d(2j)}_{s+u}\notag\\
&+\beta (a-b) \int_{0}^{h}G_{s,u-}^{2b}\sigma^{2(a-b-1)}_{(s+u)-}\,du\phantom{m}+\sum_{j=1}^{2b} \binom{2b}{j}\int_{0}^{h}  G^{2b-j}_{s,u-}\sigma^{2(a-b)+j}_{(s+u)-}d[L]_{s+u}^{(j)}+\notag\\
&\phantom{m}+\sum_{j=1}^{(a-b)} \binom{a-b}{j} \sum_{h=1}^{2b} \binom{2b}{h} \phi^{j}\int_{0}^{h} G^{2b-h}_{s,u-} \sigma^{2(a-b)+h}_{(s+u)-} d[L]_{s+u}^{(2j+h)}.\notag 
\end{align}
 
again analogously to the proof of Lemma \ref{3.2} we can show that for $v<s$ $\bE_{v}\left(G_{s,h}^{2b}\sigma^{2(a-b)}_{s+h}\right)$ solves the following ode

\[\frac{d}{dh}\bE_{v}\left(G_{s,h}^{2b}\sigma^{2(a-b)}_{s+h}\right)=\Psi(a-1)\bE_{v}\left(G_{s,h}^{2b}\sigma^{2(a-b)}_{s+h}\right)+C_{a b}(h,s,v)\]
with
\[\begin{aligned}C_{ab}(h,s,v)=&(a-b) \beta\bE_{v}\left(G_{s,h}^{2b}\sigma^{2(a-b-1)}_{s+h}\right)+\sum_{j=1}^{b} \binom{2b}{2j}  \bE_{v}\left(G_{s,h}^{2(b-j)}\sigma^{2(a-b+j)}_{s+h}\right)\E\left([L]_{1}^{(2j)}\right)+\\
&+ \sum_{m=1}^{b} \binom{2b}{2m}  \bE_{v}\left(G_{s,h}^{2(b-m)} \sigma^{2(a-b+m)}_{s+h}\right) \sum_{j=1}^{a-b} \binom{a-b}{j}\phi^{j}\E\left([L]_{1}^{(2j+2m)}\right)
\end{aligned}\]
with initial condition $\bE_{v}\left(G_{s,0}^{2b}\sigma^{2(a-b)}_{s}\right)=0$.
Solving the ode we get

\begin{equation}
\bE_{v}(G_{s,h}^{2b}\sigma^{2(a-b)}_{s+h})= e^{h \Psi(a-b)}\int_0^h C_{a b}(u,s,v)e^{-u \Psi(a-b)}du
\label{solEvGS}
\end{equation}

Let us now observe that by the inductive hypothesis formula \eqref{G2isigma2k-i} is true for all the conditional expectations appearing in $C_{ab}(u,s,v)$, thus
\[\begin{aligned} {}&\bE_{v}\left(G_{s,h}^{2b}\sigma^{2(a-b-1)}_{s+h}\right)=\sum_{r=0}^{a-1} J_{(a-1)b r}(h,s-v)\sigma^{2r}_{v}\\
&\bE_{v}\left(G_{s,h}^{2(b-j)}\sigma^{2(a-b+j)}_{s+h}\right)=\sum_{r=0}^{a} J_{a (b-j) r}(h,s-v)\sigma^{2r}_{v}\\
&\bE_{v}\left(G_{s,h}^{2(b-m)} \sigma^{2(a-b+m)}_{s+h}\right) = \sum_{r=0}^{a} J_{a(b-m)r}(h,s-v)\sigma^{2r}_{v}.
\end{aligned}\]
Substituting in \eqref{solEvGS} we get that $\bE_{v}(G_{s,h}^{2b}\sigma^{2(a-b)}_{s+h})$ is itself a polynomial in $\sigma_{v}^{2}$ of highest order $a$ with coefficients given by  formula \eqref{Jkir} if $r\neq k$ of according to formula \eqref{Jkik} if $r=k$.

To conclude the proof we need to show that if \eqref{G2isigma2k-i} is true for $k=a$ and $i\leq a-1$ then it is also true for $k=i=a$. To this aim we rewrite \eqref{G2kdoppia}, with $k=a$ 
 
\[G_{s,h}^{2a}=\sum_{i=1}^{2a} \binom{2a}{i}\int_{s}^{s+h}  (G_{u-}-G_{s})^{2a-i}\sigma^{i}_{u-}d[L]_u^{(i)}.\]
Redefining the index of the sum as $j=a-i$ we have for all $v<s$ and $h>0$

$$\begin{aligned}
\bE_{v} \left(G_{s,h}^{2a} \right)&=\sum_{j=0}^{a-1} \binom{2a}{2(a-j)}\E [L]_1^{(2(a-j))}\int_{s}^{s+h} \bE_{v} \left[G_{s,h}^{2j} \sigma^{2(a-j)}_{u}\right]du \\
&=\sum_{r=1}^{a} \sigma^{2r}_{v} \left(\sum_{j=0}^{a-1} \binom{2a}{2(a-j)}\E [L]_1^{(2(a-j))}  \int_{s}^{s+h}J_{ajr}(h,s-v) du \right) 
\end{aligned}$$
hence

\[J_{aaj}(h,s-v)= \sum_{j=0}^{a-1} \binom{2a}{2(a-j)}\E [L]_1^{(2(a-j))}  \int_{s}^{s+h}J_{ajr}(h,s-v) du \]

\end{proof}

\begin{remark}
In the coefficients given by \eqref{Jk0k} and \eqref{Jk0k-i} the dependence from the time lags $h$ and $d$ came just through the total time lag $h+d$.

\end{remark}

\subsection{Joint Moments}

In view of the construction of estimators based on the method of PBEF, the following theorem provides the result we need.
\begin{thm}
\label{congiunti}Fix any integer $k\geq 1$. Let $\Psi ( k) <0$, $\E(L_{1}^{2k}) <\infty$ and for every $c\leq k$ let  $\E([L]^{2c-1})=\int_{\R} x^{(2c-1)} d\nu(x) =0$.  For any integer $h\geq 2$ and any set of integers $i_j\geq0$, $j=1, \ldots, h$ such that $i_1+i_2 +\ldots +i_h=k$ we have for every $0\leq t_1 <  t_2 <\ldots < t_h <T$, and $t_j>t_{j-1}+r$ for any $j$,  
$$\begin{aligned}
{}&\E\left(G_{t_{h},r}^{2i_{h}}\,G_{t_{h-1},r}^{2i_{h-1}}\cdots\,G_{t_{2},r}^{2i_{2}}\,G_{t_{1},r}^{2i_{1}}\right)=\sum_{r_{1}=0}^{i_{h}} \Bigg(J_{i_{h}i_{h}r_{1}}(r,s_{h-1}-s_{h-2}-r )\\
&\phantom{cazzo merda}\cdot \sum_{r_{2}=0}^{r_{1}+i_{h-1}}\bigg\{J_{(r_{1}+i_{h-1})i_{h-1}r_{2}}(r,s_{h-2}-s_{h-3}-r) \\
&\phantom{cazzo merda}\cdot  \sum_{r_{3}=0}^{r_{2}+i_{h-2}}\bigg[J_{(r_{2}+i_{h-2})i_{h-2}r_{3}}(r,s_{h-3}-s_{h-4}-r)\cdots\\
&\phantom{cazzo merda} \cdots\sum_{r_{h-1}=0}^{r_{h-2}+i_2}\bigg(J_{(r_{h-2}+i_2)i_2r_{h-1}}(r,s_1-r ) 
\E\left(\sigma^{2r_{h-1}}_r G_r^{2i_1}\right)\bigg)\bigg]\bigg\}\Bigg),
\end{aligned}$$
where $s_{j-1}=t_j-t_1$, $j=h,\ldots, 2$.
\end{thm}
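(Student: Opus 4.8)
The plan is to derive the formula by peeling the factors off one at a time, starting from the outermost factor $G_{t_h,r}^{2i_h}$ and working inwards, at each stage combining the tower property of conditional expectation with the conditional moment formula of Theorem~\ref{att_condizionali}. First I would record the preliminary facts that make all manipulations legitimate. Since $\Psi$ is convex with $\Psi(0)=0$, the hypothesis $\Psi(k)<0$ forces $\Psi(c)<0$ for every $0<c\leq k$, and $\E(L_1^{2k})<\infty$ forces $\E(L_1^{2c})<\infty$ for $c\leq k$ by Lyapunov's inequality; together with the vanishing of the odd moments of $\nu$ up to order $2k-1$ this guarantees that every moment and conditional moment appearing below is finite (indeed all quantities involved are nonnegative, being even powers times a nonnegative power of $\sigma^2$, so the tower property applies in $[0,\infty]$ and finiteness is a separate bookkeeping point). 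The hypothesis $t_j>t_{j-1}+r$ means the increments are carried by pairwise disjoint intervals $[t_j,t_j+r]$, so that for each $m$ the product $\prod_{j=1}^{m}G_{t_j,r}^{2i_j}$ and $\sigma^{2}_{t_m+r}$ are ${\cal F}_{t_m+r}$-measurable and $t_m+r<t_{m+1}$.

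The key is to prove, by induction on $m$, a slightly more general statement: for every $\rho\geq 0$ with $\rho+i_1+\ldots+i_m\leq k$, the quantity
\[
A_m(\rho):=\E\!\left(\Big(\prod_{j=1}^{m}G_{t_j,r}^{2i_j}\Big)\sigma^{2\rho}_{t_m+r}\right)
\]
equals the corresponding nested sum; the quantity in the statement of the theorem is $A_h(0)$, with $i_1+\ldots+i_h=k$. For $m\geq 2$, conditioning on ${\cal F}_{t_{m-1}+r}$ and pulling out the ${\cal F}_{t_{m-1}+r}$-measurable factor $\prod_{j=1}^{m-1}G_{t_j,r}^{2i_j}$ gives
\[
A_m(\rho)=\E\!\left(\Big(\prod_{j=1}^{m-1}G_{t_j,r}^{2i_j}\Big)\,\bE_{t_{m-1}+r}\!\big(G_{t_m,r}^{2i_m}\sigma^{2\rho}_{t_m+r}\big)\right).
\]
The inner conditional expectation is exactly of the type computed in Theorem~\ref{att_condizionali}, applied with total order $\rho+i_m$, with the role of $i$ played by $i_m$, increment length $r$, with $s=t_m$ and $v=t_{m-1}+r$, so that $s-v=t_m-t_{m-1}-r$ and $v<s$ by the gap condition; it therefore equals $\sum_{\rho'=0}^{\rho+i_m}J_{(\rho+i_m)\,i_m\,\rho'}\big(r,t_m-t_{m-1}-r\big)\,\sigma^{2\rho'}_{t_{m-1}+r}$. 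Substituting and again using ${\cal F}_{t_{m-1}+r}$-measurability of the remaining product yields the recursion
\[
A_m(\rho)=\sum_{\rho'=0}^{\rho+i_m}J_{(\rho+i_m)\,i_m\,\rho'}\big(r,t_m-t_{m-1}-r\big)\;A_{m-1}(\rho'),
\]
and the hypotheses of Theorem~\ref{att_condizionali} hold at each step because $\rho+i_m\leq \rho+i_1+\ldots+i_m\leq k$ throughout.

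For the base case $m=1$, stationarity of the process $(G_{t,r},\sigma^{2}_{t+r})_{t\geq 0}$ gives $A_1(\rho)=\E\big(G_{t_1,r}^{2i_1}\sigma^{2\rho}_{t_1+r}\big)=\E\big(\sigma^{2\rho}_{r}G_r^{2i_1}\big)$, where we used $G_{0,r}=G_r$. It then remains to unwind the recursion starting from $A_h(0)$: applying it with $m=h,\ \rho=0$ produces the outermost sum over $r_1$ with coefficient $J_{i_hi_hr_1}(r,t_h-t_{h-1}-r)$; applying it with $m=h-1,\ \rho=r_1$ produces the next sum over $r_2$ with coefficient $J_{(r_1+i_{h-1})i_{h-1}r_2}(r,t_{h-1}-t_{h-2}-r)$; continuing down to $m=2,\ \rho=r_{h-2}$ produces the innermost sum over $r_{h-1}$ with coefficient $J_{(r_{h-2}+i_2)i_2r_{h-1}}(r,t_2-t_1-r)$, multiplied by $A_1(r_{h-1})=\E(\sigma^{2r_{h-1}}_rG_r^{2i_1})$. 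Rewriting the time lags through $s_{j-1}=t_j-t_1$ (so that $t_m-t_{m-1}-r=s_{m-1}-s_{m-2}-r$ with the convention $s_0=0$, in particular $t_2-t_1-r=s_1-r$) gives exactly the claimed expression. The only genuinely delicate point is the choice of the generalized inductive quantity $A_m(\rho)$ carrying the extra power $\sigma^{2\rho}_{t_m+r}$: without it the induction does not close, since peeling a plain factor $G_{t_m,r}^{2i_m}$ immediately creates such a $\sigma$-decorated factor at time $t_{m-1}+r$. Everything else is index bookkeeping together with the already-proved Theorem~\ref{att_condizionali}.
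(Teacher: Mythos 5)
Your proof is correct and follows essentially the same route as the paper: repeated conditioning from the latest increment inwards, applying Theorem~\ref{att_condizionali} at each stage to replace the conditional expectation by a $J$-coefficient times a power of $\sigma^2$ at the earlier conditioning time, and closing with stationarity. Your auxiliary quantity $A_m(\rho)$ merely makes explicit the induction that the paper carries out verbally when it says the procedure is ``repeated from the innermost to the outermost'' conditional expectation.
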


\begin{proof}
By stationarity of $G_{t,r}$ we can write
$$
\E\left(G_{t_{h},r}^{2i_{h}}\,G_{t_{h-1},r}^{2i_{h-1}}\cdots\,G_{t_{2},r}^{2i_{2}}\,G_{t_{1},r}^{2i_{1}}\right)=\E\left(G_{s_{h-1},r}^{2i_{h}}G_{s_{h-2},r}^{2i_{h-1}}\cdots\,G_{s_{1},r}^{2i_{2}}\,G_{r}^{2i_{1}}\right)
$$
Taking the conditional expectation repeatedly in the right hand side we get
$$
\E\Bigg[\bE_{r}\left\{\cdots\bE_{s_{h-3}+r}\left[\bE_{s_{h-2}+r}\left(G_{s_{h-1},r}^{2i_{h}}\right)\,G_{s_{h-2},r}^{2i_{h-1}}\right]G_{s_{h-3},r}^{2i_{h-2}}\cdots G_{s_{1},r}^{2i_{2}}\right\}G_{r}^{2i_{1}}
\Bigg].
$$
Starting with the innermost conditional expectation we can apply Theorem \ref{att_condizionali}  reducing the argument to a deterministic part that just contains some $J$ coefficient (with appropriate indexes) and a part that is measurable with respect to the $\sigma$-algebra we are conditioning on (${\cal F}_{s_{h-2}+r}$ in that case). 
Repeating the procedure for every conditional expectation subsequently from the innermost to the outermost we are left we get the thesis.  
\end{proof}

\begin{remark}
The condition $\int_{\R} x^{(2c-1)} d\nu(x) =0$ for any $2\leq c\leq k$ is rather strong and it is close to assume that the L\'evy measure is symmetric. A more detailed discussion of its relevance in the model is deferred to Section \ref{discussionAssumptions} 
\end{remark}

\begin{remark}[Explicit expressions]
For $k=1$ and $k=2$  the moments were already calculated (see formulae (9) and (10) in \cite{kluppelbergEJ2007}). We recover equivalent expressions. Explicit formulae for all the joint moments with $k=3$ and $k=4$ have been derived performing the iterations by symbolic computation in Mathematica \cite{Mathematica}. The final results are very long and it is unfeasible to display them here. We include as supporting information to the paper (see also the Section at the end of the paper) a Mathematica notebook \cite{Mathematica}, that allows to calculate and manipulate all the expressions of the moments and that is able to produce as output a computer-readable form that can be evaluated numerically in R \cite{R} or in C \cite{CKR}. Functions that allow the numerical evaluation of the joint moments are available from the R package COGARCH that is described in a dedicated file included in the Supporting Information.
\end{remark}

\section{The estimation method and the assumptions} \label{explicitAssumptions}
We are now able to summarize how the iterative expressions found in the previous section  can be concretely turned into a feasible estimation method. In particular we choose two PBEFs: the one corresponding to the minimum mean squared prediction error (MSPE) of Example \ref{esempio} and the optimal one with weights \eqref{optweights1}. We want to spell out the conditions we need in each case to get both calculable expressions and nice asymptotic properties.

\subsection{MSPE estimation}
The estimation by means of \eqref{naive} requires the explicit expressions of the moments up to the order four in order to compute the predictors \eqref{predictors} and the contrast function \eqref{naive} that is then minimized numerically.

A set of conditions that both provide the good asymptotic properties of Theorem \ref{asymptotics} and the necessary moments are listed below.

\begin{cond}
\begin{enumerate}
\item $\E (L_{1})=0$ and $\E (L_{1}^{2})=1$. 
\item There exist a constant $\delta>0$ such that  $\E_\theta(L_1^{8+\delta})<\infty$.
\item $\Psi(4)<0$.
\item $\int_{\mathbb{R}}x^{3}d\nu(x)=0$
\end{enumerate}\label{final_cond_MSPE}
\end{cond}

\begin{remark}
The MSPE method does not need any additional condition with respect to those that were asked for in order to get similar asymptotic properties for the estimators derived by a method of moments in \cite{kluppelbergEJ2007}.
\end{remark}
\subsection{Optimal PBEF}
The OPBEF introduced in Section \ref{OPBE} requires the explicit expressions of the moments up to the order eight in order to compute the weights \eqref{optweights1}.
The existence of such moments is already guaranteed by Conditions \ref{final_cond_MSPE}, but to calculate their explicit expressions (cf. Theorem \ref{congiunti}) we need the following further assumptions.

\begin{cond}\label{further_conditions_OPBE}
$\int_{\mathbb{R}}x^{5}d\nu(x)=0$ and $\int_{\mathbb{R}}x^{7}d\nu(x)=0$.
\end{cond}

Under Conditions \ref{final_cond_MSPE} and \ref{further_conditions_OPBE} the estimating function exists and the estimators are consistent and asymptotically normal according to Theorem \ref{asymptotics}.

\subsection{Discussion on the assumpions}\label{discussionAssumptions}
Two of the previous conditions are rather strong: the existence of (slightly more than) eight moments for the L\'evy process and 
the condition $\int_{\R} x^{(2c-1)} d\nu(x) =0$ for any $2\leq c\leq k$, that is close to assume that the L\'evy measure is symmetric.
The former is necessary if we want to prove the asymptotic normality of nearly any estimator, since (together with $\Psi(4)<0$) it guarantees the mixing condition for the process of log returns (cf. also the paper \cite{kluppelbergEJ2007} on the method of moments).
The latter is needed mainly because the calculation of the moments is simplified and less cumbersome. As remarked above, it is needed for the MSPE method only for $k=2$ (the same condition that appears in \cite{kluppelbergEJ2007}), while to calculate the OPBE  it is asked for $k=4$. Let also remark  that in real financial data it is known that often there is an asymmetric response of the volatility and this assumption is  not able to model this leverage effect, (see also a comment about that in \cite{kluppelbergEJ2007}). An improved version of the COGARCH model that can take in account such effect has been recently proposed in \cite{asimmetrico}.
Still, however there is a number of parametric families of L\'evy processes like Compound Poisson, Normal Inverse Gaussian, Variance Gamma and Meixner processes  that for some value of their parameters satisfy both Conditions \ref{final_cond_MSPE} and Condition \ref{further_conditions_OPBE}. A detailed discussion is presented for the Variance Gamma family below.

\subsection{Example: Variance Gamma}

The Variance Gamma process $V_{t}$ is an infinity activity pure jump L\'evy process that has been used itself to model log returns \cite{madanSeneta}. 
The characteristic function is given by
$$
\E\left(e^{iuV_t}\right)=\left( 1+ \frac{A^{2}u^2}{2C} \right)^{-tC}, \quad C>0,A>0.
$$ 
and the L\'evy measure has density 
\begin{equation}
\nu_L(dx)=\frac{C}{|x|} \exp\left( -\frac{|x|}{A} \sqrt{2C}\right) dx \quad x\neq 0.
\label{nu}\end{equation}
The Variance Gamma process has finite moments of any order and a symmetric density which cannot be expressed in a closed form. Its variance is given by $A^{2}t$.
If we assume that it drives (without a Brownian component) a COGARCH(1,1) model, the first of Conditions \ref{final_cond_MSPE} imposes $A=1$, while the parameter $C$ remains free.

\section{Numerical example}\label{simulation_study}
To evaluate the performance of PBEF compared with other estimators we set up a numerical example. 
The setting is the same as in \cite{kluppelbergEJ2007}: the model is a COGARCH(1,1) with true parameters $\theta_{0}=(\beta_{0},\eta_{0},\phi_{0})=(0.04,0.053,0.038)$ driven by a Variance Gamma process. The parameter $C$ in \eqref{nu} is fixed to $C=1$ (and non estimated). In this setting we have $\Psi(4)=-0.0261<0$ and Condition \ref{final_cond_MSPE} is fulfilled. The code for the numerical example (simulation, evaluation of the moments and estimation with the different methods) has been made available as an R package called COGARCH. Instructions on how to install and use it are included as Supporting Information.

\subsection{Asymptotic variances}
Numerical evaluation of the iterative expressions of Section \ref{moment} for the higher moments makes possible to calculate the asymptotic variances of the MSPE estimator obtained with the weights in \eqref{naive} and of the OPBE (Optimal Prediction Based Estimator, see \eqref{optweights1}).
The explicit calculation of the matrix $M_{n}(\theta_{0})$ in \eqref{matriceMn} takes a long time (nearly 100 minutes on a recent personal computer, with a careful C implementation) and is not feasible for repeated evaluation within an optimization algorithm. Since the sequence $M_{n}(\theta_{0})$ of equation \eqref{matriceMn} converges exponentially fast to $M(\theta_{0})$ and since our numerical experiment actually demonstrates that already $M_{0}(\theta_{0})$ approximates $M(\theta_{0})$ very well, it seems reasonable to use $M_{0}(\theta_{0})$ instead of $M_{n}(\theta_{0})$ in the weights \eqref{optweights1} and to call this estimator \emph{approximate} OPBE. With such approximation, a negligible increase in the asymptotic variance is introduced. The results are summarized in Table \ref{AV}.
\begin{table}
\begin{center}
\begin{tabular}{cc}
MSPE&$\displaystyle\begin{pmatrix}  4.668 &2.989& 1.216\\
2.989& 3.172& 2.058\\
1.216& 2.058& 1.628\end{pmatrix}$\\
\phantom{M}&\\
OPBE&$\displaystyle\begin{pmatrix} 4.503& 2.844& 1.133\\
2.844& 3.045& 1.985\\
1.133& 1.985& 1.587\end{pmatrix}$\\
\phantom{M}&\\
approximate OPBE&$\displaystyle\begin{pmatrix} 4.504& 2.845& 1.134\\
2.845& 3.047& 1.988\\
1.134& 1.988& 1.588\end{pmatrix}$
\end{tabular}
\end{center}
\caption{Asymptotic variances $V(\theta_{0})$ of $\sqrt{n}\,(\hat\theta-\theta_{0})$ given by Theorem \ref{asymptotics}.\label{AV}}
\end{table}

\subsection{Simulation study}
To investigate the finite sample properties of the different estimation methods we set up a simulation study.
We generate 10000 trajectories of $G$ with 20000 observations separated by a time lag $r=1$, $\{G_{j}\}_{j=1\ldots 20000}$ and computed the log-returns $G_{j, 1}$. To increase the accuracy of the simulation the actual grid for the Euler method was 1000 times finer with respect to the final grid of the observations.
From each simulated sample we estimated the parameters with all the four available methods: the method of moments (MME) of \cite{kluppelbergEJ2007}, the Pseudo-Maximum Likelihood (PML) method proposed in \cite{mleMaller}, and the new estimators MSPE and OPBE introduced above. 
Aware of the results of the previous subsection we calculated the OPBE approximatively using $M_{0}(\theta)$ instead of $M_{n}(\theta)$ in the weights.

\begin{table}
\begin{center}
\begin{tabular}{ccccc}
method&$\text{avg}(\hat{\theta})$&$\theta_{0}$&$\frac{|\text{avg}(\hat{\theta})-\theta_{0}|}{\theta_{0}}$&$\text{cov}(\hat{\theta})$\\\hline
\phantom{M}&\\
MME&
$\displaystyle\begin{matrix}0.0498\\
0.0585\\
0.0397\end{matrix}$
&
$\displaystyle\begin{matrix}0.04\\
0.053\\
0.038\end{matrix}$
&
$\displaystyle\begin{matrix}0.245\\
0.103\\
0.045\end{matrix}$
&
$\displaystyle\begin{pmatrix}2.41\text{e}-04 & 1.79\text{e}-04 & 8.63\text{e}-05 \\ 
  1.79\text{e}-04 & 1.81\text{e}-04 & 1.14\text{e}-04 \\ 
  8.63\text{e}-05 & 1.14\text{e}-04 & 8.23\text{e}-05\end{pmatrix}$ \\
\phantom{M}&\\
PML&
$\displaystyle\begin{matrix}0.0418\\
0.0433\\
0.0278\end{matrix}$
&
$\displaystyle\begin{matrix}0.04\\
0.053\\
0.038\end{matrix}$
&
$\displaystyle\begin{matrix}0.046\\
0.184\\
0.268\end{matrix}$
&
$\displaystyle\begin{pmatrix}    6.85\text{e}-05 & 4.12\text{e}-05 & 1.49\text{e}-05 \\ 
  4.12\text{e}-05 & 3.19\text{e}-05 & 1.65\text{e}-05 \\ 
  1.49\text{e}-05 & 1.65\text{e}-05 & 1.19\text{e}-05 \end{pmatrix}$\\
\phantom{M}&\\
MSPE&
$\displaystyle\begin{matrix}0.0430\\
0.0538\\
0.0376\end{matrix}$
&
$\displaystyle\begin{matrix}0.04\\
0.053\\
0.038\end{matrix}$

&
$\displaystyle\begin{matrix}0.075\\
0.014\\
0.011\end{matrix}$
&
$\displaystyle\begin{pmatrix} 1.57\text{e}-04 & 1.18\text{e}-04 & 5.87\text{e}-05 \\ 
  1.18\text{e}-04 & 1.15\text{e}-04 & 7.14\text{e}-05 \\ 
  5.87\text{e}-05 & 7.14\text{e}-05 & 5.06\text{e}-05\end{pmatrix}$\\
\phantom{M}&&\\
OPBE&
$\displaystyle\begin{matrix}0.0428\\
0.0533\\
0.0372\end{matrix}$&
$\displaystyle\begin{matrix}0.04\\
0.053\\
0.038\end{matrix}$
&
$\displaystyle\begin{matrix}0.069\\
0.006\\
0.021\end{matrix}$
&
$\displaystyle\begin{pmatrix} 1.34\text{e}-04 & 9.86\text{e}-05 & 4.75\text{e}-05 \\ 
  9.86\text{e}-05 & 9.52\text{e}-05 & 5.81\text{e}-05 \\ 
  4.75\text{e}-05 & 5.81\text{e}-05 & 4.11\text{e}-05\end{pmatrix}$
\end{tabular}
\end{center}
\caption{Summary statistics of the estimates.\label{summary} The values of the OPBE are approximated as described in the text.}
\end{table}

Descriptive statistics of the estimates are compared in Table \ref{summary}. Figure \ref{densities} illustrates the empirical densities of the estimates for the three parameters. 
The OPBE outperforms all the other methods. Indeed, PML estimator has a smaller variance, but a relevant bias, while the MME has an higher variance. The difference between the OPBE and the suboptimal MSPE is very small (and difficult to be appreciated in Figure \ref{densities}).
Let us remark that although our sample is very large (20000 observations at lag 1 for each trajectory) asymptotic normality is far from being reached as Figure \ref{QQ} shows for both MME and OPBE (we plotted the QQ plot just for the parameter $\eta$ since the other two are very similar). A possible reason is that asymptotic normality only holds under $\Psi(4)<0$: in the chosen parameter setting the value of $\Psi(4)$ is negative but very close to zero. The heavy tail of the estimates may also explain the fact that the the empirical covariances displayed in Table \ref{summary} sistematically underestimate their theoretical value that can be obtained dividing by the sample size $n=20000$ the values reported in Table \ref{AV}.

\begin{figure}[h]
   \centering
   \includegraphics[width=1.00\textwidth]{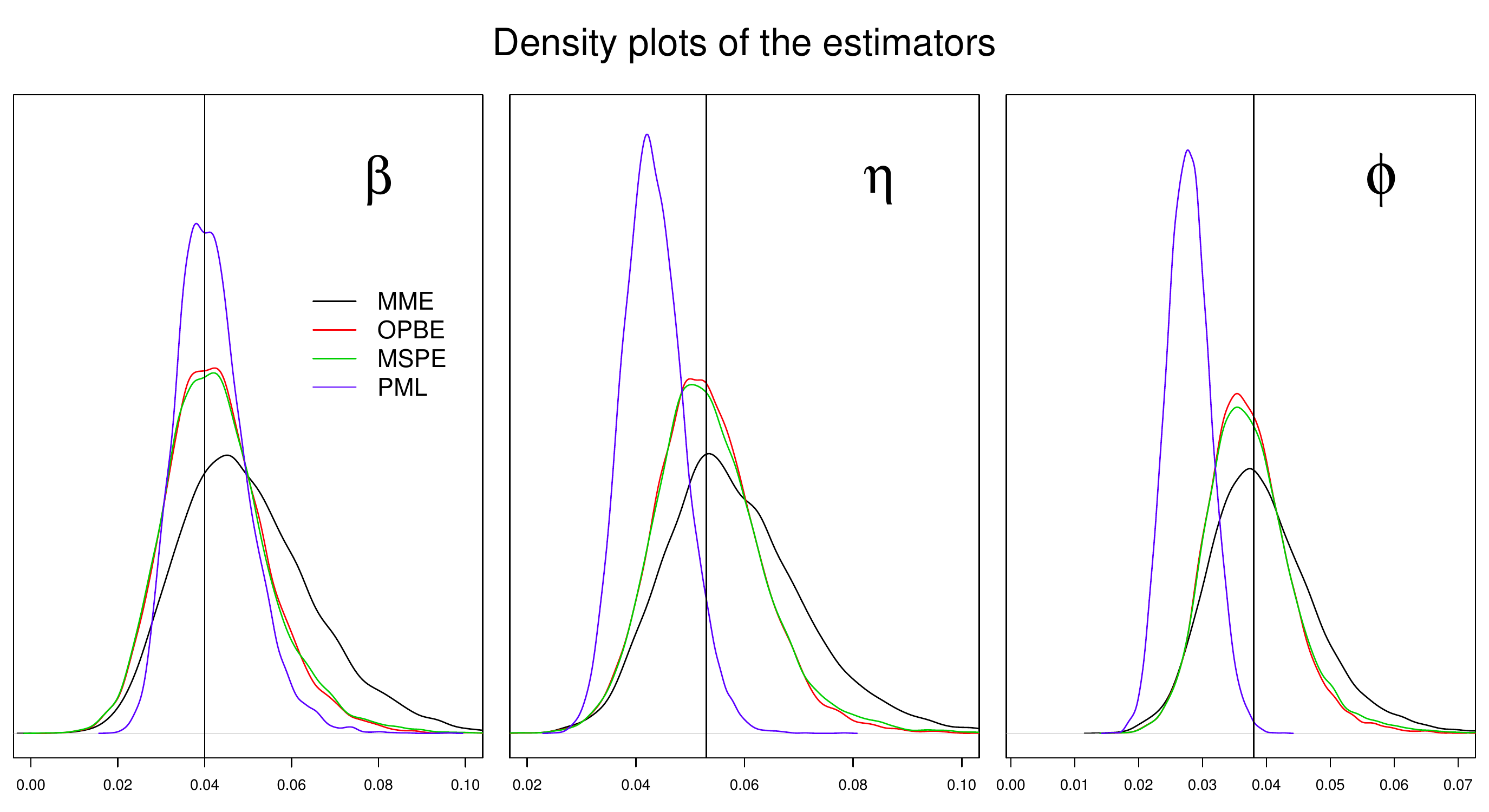}
   \caption{Comparison between the empirical densities of the different estimators.\label{densities}}
\end{figure}

\begin{figure}[h]
   \centering
   \includegraphics[width=1.00\textwidth]{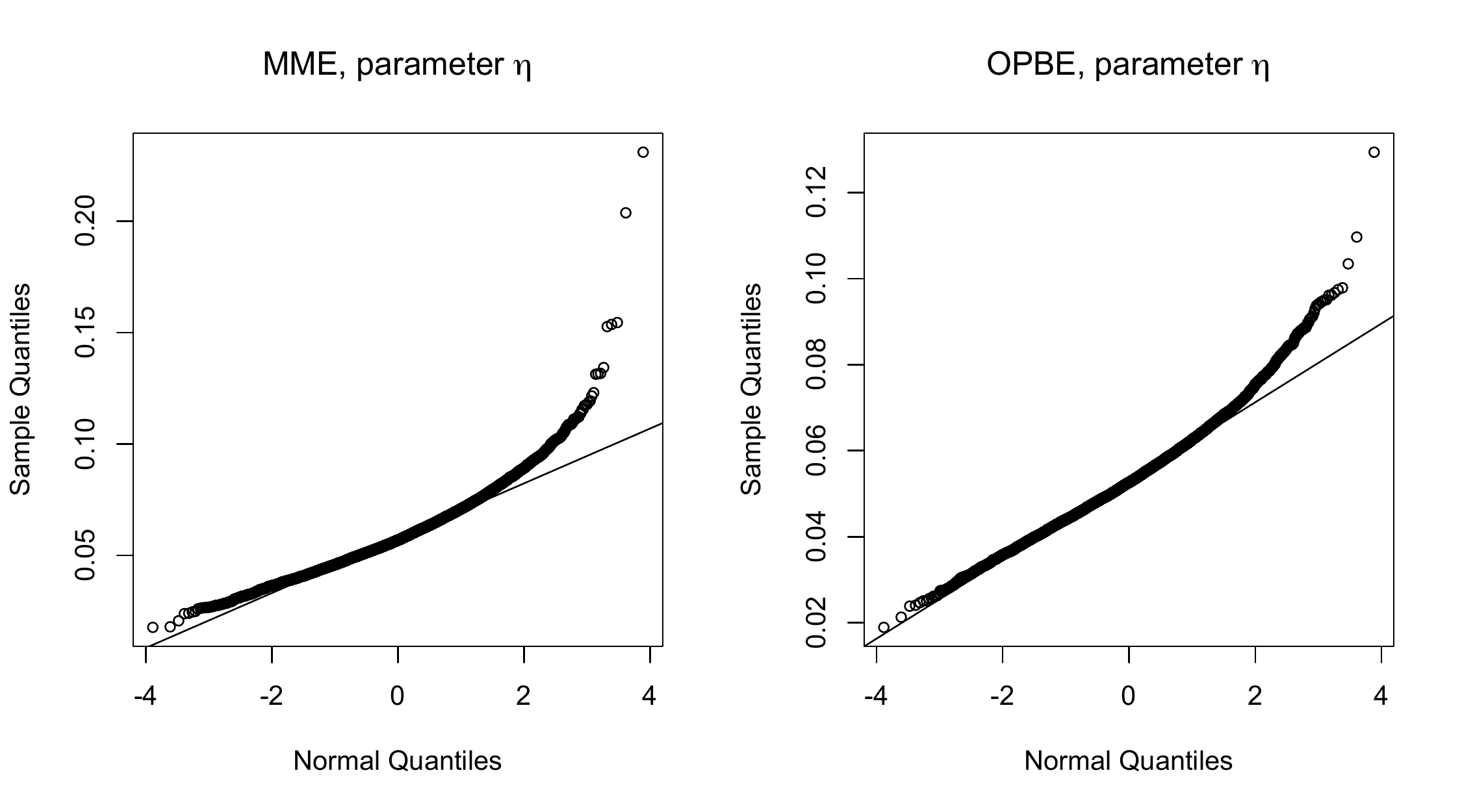}
   \caption{QQplot of the MME and OPBE of parameter $\eta$.\label{QQ}}
\end{figure}

\section{Conclusion and further developments}
We specialized the method of Prediction Based Estimating function for the COGARCH(1,1) model. Motivated by the search for an optimal PBEF, we have investigated the higher order structure of the process. Iterative expressions to calculate the higher order moments are derived. The asymptotic properties of the estimators are studied and illustrated by a numerical example. PBEF are shown to outperform all other available estimation methods. Further work will be dedicated to the development of an R package to simulate and estimate the parameters of the COGARCH model. A faster algorithm to calculate the matrix $M_{n}(\theta)$ would also be desirable. There is no technical obstruction to apply the PBEF method to non equally spaced observations, a larger empirical study would be worth.

\label{conclusion}

\section*{Supporting information}\label{supp}
Additional information for this article is available online. It regards the R package COGARCH that provides the code for the simulation, estimation, and evaluation of the moments and also a Mathematica code that implements the recursive formulae of Section \ref{moment}. Due to their length, indeed, the outputs of such formulae are unmanageable for a human. We include as a supplementary material to this paper a Mathematica notebook that allows to calculate all of them and to provide the results as computer-readable text files that are then used into the R package. The content of the supplementary material is the following:
\begin{itemize}
\item the file \emph{how2useCOGARCH.pdf} that is a brief documentation of the R package COGARCH
\item the file \emph{ExplicitExpressions.nb} is the Mathematica notebook \cite{Mathematica}
\item the file \emph{ExplicitExpressions.pdf} reports the content of the notebook in a file format that is readable without a Mathematica license
\item a directory where precomputed outputs of the notebook are stored in order to make the execution of the notebook faster. The notebook does not need the presence of such directory, however in its absence it has to compute all the expression anew and the execution becomes much slower. Moreover into the subfolder R the final expressions of the moments are saved as text files with a syntax that is compatible with the open source software R \cite{R}.
\end{itemize}

\section*{Acknowledgements}
This work has been supported by PRIN 2009JW2STY. The authors are grateful to Claudia Kluppelberg for her stimulating encouragement and to Michael S{\o}rensen and Susanne Ditlevsen, that both suggested the use of the approximation for the limit matrix. Thanks to Giovanni Birolo who wrote the  supplementary material. EB dedicates this paper to Martino and fagiolino.

\bibliographystyle{apalike}
\bibliography{BLSS}

\end{document}